\newtheorem{Theorem}{Theorem}[section]
\newtheorem{Definition}{Definition}[section]
\newtheorem{Proposition}{Proposition}[section]
\newtheorem{Lemma}{Lemma}[section]
\newtheorem{Example}{Example}[section]
\newtheorem{Corollary}{Corollary}[section]
\newcommand{\Section}[1]{
        \par
        \stepcounter{section}
        \settowidth{\hangindent}{\large\bf\thesection.~}
        \hangafter=1
        \bigskip\bigskip\noindent
        {\large\bf\hbox{\thesection.~}#1}\par
        \nopagebreak
        \medskip
}
\newcommand{\alglist}{
\begin{list}{Step 1}
{\setlength{\leftmargin}{1.1 in}\setlength{\labelwidth}{1.0 in}} }
\newenvironment{proof}{{\bf Proof.}\,}{\hfill$\hbox{\rule{5pt}{5pt}}$\\}
\begin{document}
\title{The Dominant Eigenvalue of an Essentially Nonnegative Tensor
}

\author{ Liping Zhang$^a$\thanks{Corresponding author. E-mail address: lzhang@math.stinghua.edu.cn} \quad\quad Liqun Qi$^b$\thanks{Email address: maqilq@polyu.edu.hk}\quad\quad Ziyan
Luo$^c$\thanks{E-mail address: starkeynature@hotmail.com}
\\
{\small $^a$ Department of Mathematical Sciences, Tsinghua University, Beijing, China}\\
{\small $^b$ Department of Applied Mathematics, The Hong Kong
Polytechnic University, Hong Kong}\\
{\small $^c$ Department of Mathematics, Beijing Jiaotong University,
Beijing, China}\\
 }

\date{}
\maketitle

{\bf Abstract.} It is well known that the dominant eigenvalue of a
real essentially nonnegative matrix is a convex function of its
diagonal entries. This convexity is of practical importance in
population biology, graph theory, demography, analytic hierarchy
process and so on. In this paper, the concept of essentially
nonnegativity is extended from matrices to higher order tensors, and
the convexity and log convexity of dominant eigenvalues for such a
class of tensors are established. Particularly, for any nonnegative
tensor, the spectral radius turns out to be the dominant eigenvalue
and hence possesses these convexities.  Finally, an algorithm is
given to calculate the dominant eigenvalue, and numerical results
are reported to show the effectiveness of the proposed algorithm.
\vspace{2mm}

{\bf Key words.} Essentially nonnegative tensor, dominant
eigenvalue, convex function, spectral radius, algorithm.

{\bf AMS subject classifications. }  74B99, 15A18, 15A69

\vspace{2mm}


\Section{Introduction}
Tensors are increasingly ubiquitous in
various areas of applied, computational, and industrial mathematics
and have wide applications in data analysis and mining, information
science, signal/image processing, and computational biology, etc;
see the workshop report \cite{direction09} and references therein. A
tensor can be regarded as a higher-order generalization of a matrix,
which takes the form
\begin{displaymath}
\mathcal{A}=\left(A_{i_1\cdots i_m}\right),\quad A_{i_1\cdots i_m}
\in R,\quad 1\le i_1,\ldots,i_m\le n.
\end{displaymath}
Such a multi-array $\mathcal{A}$ is said to be an {\it$m$-order
$n$-dimensional square real tensor} with $n^m$ entries $A_{i_1\cdots
i_m}$. In this regard, a vector is a first-order tensor and a matrix
is a second-order tensor. Tensors of order more than two are called
higher-order tensors.

Analogous with that of matrices, the theory of eigenvalues and
eigenvectors is one of the fundamental and essential components in
tensor analysis. $72$ references on eigenvalues of tensors can be
found in the bibliography \cite{s13}.  Wide range of practical
applications can be found the references there. Compared with that
of matrices, eigenvalue problems for higher-order tensors are
nonlinear due to their multilinear structure. Various types of
eigenvalues are defined for higher-order tensors in the setting of
multilinear algebra. For example, the eigenvalue, the
$H$-eigenvalue, the $E$-eigenvalue, the $Z$-eigenvalue, the
$N$-eigenvalue defined by Qi for even order symmetric tensors
\cite{s4}, the $l^p$ eigenvalues for general order symmetric
tensors, and the mode-$i$ eigenvalues for general square tensors
defined by Lim \cite{s2}, the $M$-eigenvalue for a partially
symmetric fourth-order tensor, defined by Qi, Dai and Han \cite{s9},
the $D$-eigenvalue for a fourth-order symmetric tensor and a
second-order symmetric tensor, defined by Qi, Wang and Wu
\cite{s10}, eigenvalues of general square tensors extended by Qi in
\cite{s13} Chang, Pearson and Zhang in \cite{s09} and equivalent
eigenvalue pair classes by Cartwright and Sturmfels \cite{CS10}.
Here, we are concerned with the one in \cite{s09, s13} as reviewed
below.
\begin{Definition}
Let $\textrm{C}$ be the complex field. A pair $(\lambda,x)\in
\textrm{C}\times (\textrm{C}^n\backslash\{0\})$ is called an
eigenvalue-eigenvector pair of $\mathcal{A}$, if they satisfy:
\begin{equation}\label{heigen}
 \mathcal{A}x^{m-1} = \lambda x^{[m-1]},
\end{equation}
where $n$-dimensional column vectors
$\mathcal{A}x^{m-1}$ and $x^{[m-1]}$ are defined as
\begin{eqnarray*}
\mathcal{A}x^{m-1} :=\left(\sum^n_{i_2,\ldots,i_m=1}A_{ii_2\cdots i_m}x_{i_2}\cdots x_{i_m}\right)_{1\le i\le n}\quad \text{and} \quad
\quad
x^{[m-1]}:=\left( x^{m-1}_i\right)_{1\le i\le n},
\end{eqnarray*}
respectively.
\end{Definition}

Nonnegative tensors, arising from multilinear pagerank \cite{s2},
spectral hypergraph theory \cite{BP, BP1, hq}, and higher-order
Markov chains \cite{s12}, etc., form a singularly important class of
tensors and have attracted more and more attention since they share
some intrinsic properties with those of the nonnegative matrices.
One of those properties is the Perron-Frobenius theorem on
eigenvalues. In \cite{s1}, Chang, Pearson, and Zhang generalized the
Perron-Frobenius theorem for nonnegative matrices to irreducible
nonnegative tensors. In \cite{fgh}, Friedland, Gaubert and Han
generalized the Perron-Frobenius theorem to weakly irreducible
nonnegative tensors. Further generalization of the Perron-Frobenius
theorem to nonnegative tensors can be found in \cite{pear, yy, yy1}.
Numerical methods for finding the spectral radius of nonnegative
tensors are subsequently proposed. Ng, Qi, and Zhou \cite{s12}
provided an iterative method to find the largest eigenvalue of an
irreducible nonnegative tensor by extending the Collatz method
\cite{colla} for calculating the spectral radius of an irreducible
nonnegative matrix. The Ng-Qi-Zhou method is efficient but it is not
always convergent for irreducible nonnegative tensors. Chang,
Pearson and Zhang \cite{cpz3} extended the notion of primitive
matrices into the realm of tensors, and established the convergence
of the Ng-Qi-Zhou method for primitive tensors. Zhang and Qi
\cite{zq} established global linear convergence of the Ng-Qi-Zhou
method for essentially positive tensors. Liu, Zhou and Ibrahim
\cite{lzi} proposed an always convergent algorithm for computing the
largest eigenvalue of an irreducible nonnegative tensors.    Zhang,
Qi, and Xu \cite{zqx} established its explicit linear convergence
rate for weakly positive tensors.

The essentially nonnegative tensor we defined in this paper is
ultimately related to the nonnegative tensor and includes the latter
one as a special case. It is a higher order generalization of the
so-called essentially nonnegative matrix, whose off-diagonal entries
are all nonnegative. Such a class of matrices possesses nice
properties on eigenvalues. It follows from the famous
Perron-Frobenius theorem for nonnegative matrices that for any
essentially nonnegative matrix $A$, there exists a real eigenvalue
with a nonnegative eigenvector, which is the largest one among real
parts of all other eigenvalues of $A$. This special eigenvalue,
termed as $r(A)$, is often called the \emph{dominant eigenvalue} of
$A$. Moreover, $r(A)$ is known as a convex function of the diagonal
entries of $A$. This convexity is a fundamental property for
essentially nonnegative matrices \cite{convex1, convex2, horn,
convex3} and has numerous applications, not only in many branches of
mathematics, such as graph theory \cite{graph}, differential
equations \cite{convex3}, but also in practical fields, e.g.,
population biology \cite{convex3}, demography \cite{demo}, and
analytic hierarchy process as well \cite{ahp}. A natural question
arises: does this convexity maintain for higher-order essentially
nonnegative tensors? In this paper, we will give an affirmative
answer to this question.

Similar to the essentially nonnegative matrix, an essentially
nonnegative tensor has a real eigenvalue with the property that it
is greater than or equal to the real part of every eigenvalue of
$\mathcal{A}$. We also call it the \emph{dominant eigenvalue} of
$\mathcal{A}$, and denoted by $\lambda(\mathcal{A})$. Particularly,
if $\mathcal{A}$ is nonnegative, we have
$\rho(\mathcal{A})=\lambda(\mathcal{A})$, where $\rho(\mathcal{A})$
is the spectral radius of $\mathcal{A}$. By employing the technique
proposed in \cite{convex3}, we manage to obtain that the dominant
eigenvalue is a convex function of the diagonal elements for any
essentially nonnegative tensor. In addition, it is also a convex
function of all elements of a tensor in some special convex set of
tensors. Furthermore, the log convexity is also exploited for
essentially nonnegative tensors with whose entries are either
identically zero or log convex of some real univariate functions.
Finally, we propose an algorithm to calculate the dominant
eigenvalue, convergence of the proposed algorithm is established and
numerical results are reported to show the effectiveness of the
proposed algorithm.

This paper is organized as follows. In Section 2, we recall some
preliminary results, introduce the concept of essentially
nonnegative tensors, and characterize some basic properties of such
tensors. In Section 3, we show that the spectral radius of
nonnegative tensors is a convex function of the diagonal elements,
and so is the dominant eigenvalue of essentially nonnegative
tensors.  Section 4 is devoted to the log convexity of the dominant
eigenvalue. In Section 5, we give an algorithm to calculate the
dominant eigenvalue, and some numerical results are reported. Some
concluding remarks are made in Section 6.

\Section{Preliminaries and essentially nonnegative tensors}

We start this section with some fundamental notions and properties
on tensors. An $m$-order $n$-dimensional tensor $\mathcal{A}$ is
called nonnegative (or, respectively, positive) if $A_{i_1\cdots
i_m}\ge 0$ (or, respectively, $A_{i_1\cdots i_m}> 0$). The $m$-order
$n$-dimensional unit tensor, denoted by $\mathcal{I}$, is the tensor
whose entries are $\delta_{i_1\ldots i_m}$ with $\delta_{i_1\ldots
i_m}=1$ if and only if $i_1=\cdots =i_m$ and otherwise zero. The
symbol $\mathcal{A}\ge \mathcal{B}$ means that
$\mathcal{A}-\mathcal{B}$ is a nonnegative tensor. A tensor
$\mathcal{A}$ is called reducible, if there exists a nonempty proper
index subset $I\subset \{1,2,\ldots,n\}$ such that
\begin{displaymath}
A_{i_1\cdots i_m} =0,\quad \forall i_1\in I,\quad \forall
i_2,\ldots,i_m\not\in I.
\end{displaymath}
Otherwise, we say $\mathcal{A}$ is irreducible. We call
$\rho(\mathcal{A})$ the spectral radius of tensor $\mathcal{A}$ if
\begin{displaymath}
\rho(\mathcal{A})=\max\{|\lambda|:\, \text{$\lambda$ is an
eigenvalue of $\mathcal{A}$}\},
\end{displaymath}
where $|\lambda|$ denotes the modulus of $\lambda$. An immediate
consequence on the spectral radius follows directly from Corollary 3
in \cite{s4}.

\begin{Lemma}\label{qi}
Let $\mathcal{A}$ be an $m$-order $n$-dimensional tensor. Suppose
that $\mathcal{B}=a(\mathcal{A}+b\mathcal{I})$, where $a$ and $b$
are two real numbers. Then $\mu$ is an eigenvalue of $\mathcal{B}$
if and only if $\mu=a(\lambda+b)$ and $\lambda$ is an eigenvalue of
$\mathcal{A}$. In this case, they have the same eigenvectors.
Moreover, $\rho(\mathcal{B})\le
|a|\left(\rho(\mathcal{A})+|b|\right)$.
\end{Lemma}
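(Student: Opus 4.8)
The plan is to argue directly from the defining eigen-equation \eqref{heigen}, using the single algebraic identity that drives everything. First I would record that the unit tensor acts on any vector $x$ by $\mathcal{I}x^{m-1}=x^{[m-1]}$; indeed, $\sum_{i_2,\ldots,i_m}\delta_{i i_2\cdots i_m}x_{i_2}\cdots x_{i_m}=x_i^{m-1}$ for each $i$ straight from the definition of $\mathcal{I}$. Hence, for every $x\in\textrm{C}^n\backslash\{0\}$,
\[
\mathcal{B}x^{m-1}=a\bigl(\mathcal{A}x^{m-1}+b\,x^{[m-1]}\bigr),
\]
and the whole lemma is a consequence of this identity together with the fact that $\lambda\mapsto a(\lambda+b)$ is a bijection of $\textrm{C}$ when $a\neq0$.

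Next I would treat the two implications. For the ``if'' part: if $(\lambda,x)$ satisfies $\mathcal{A}x^{m-1}=\lambda x^{[m-1]}$, substituting into the identity gives $\mathcal{B}x^{m-1}=a(\lambda+b)x^{[m-1]}$, so $(a(\lambda+b),x)$ is an eigenpair of $\mathcal{B}$. For the ``only if'' part, assuming $a\neq0$: if $\mathcal{B}x^{m-1}=\mu x^{[m-1]}$, dividing the identity by $a$ and rearranging yields $\mathcal{A}x^{m-1}=(\mu/a-b)x^{[m-1]}$, so $x$ is an eigenvector of $\mathcal{A}$ with eigenvalue $\lambda=\mu/a-b$, equivalently $\mu=a(\lambda+b)$. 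In both directions the eigenvector is unchanged, which gives the ``same eigenvectors'' claim. The degenerate case $a=0$ I would dispatch separately: then $\mathcal{B}$ is the zero tensor, whose only eigenvalue is $0$, and since $\mathcal{A}$ has at least one eigenvalue (Qi's existence result in \cite{s4}) the relation $\mu=0=a(\lambda+b)$ holds, so the statement is consistent in that case too.

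Finally, for the spectral-radius bound I would pick an eigenvalue $\mu$ of $\mathcal{B}$ with $|\mu|=\rho(\mathcal{B})$; by the correspondence just established $\mu=a(\lambda+b)$ for some eigenvalue $\lambda$ of $\mathcal{A}$, so
\[
\rho(\mathcal{B})=|a|\,|\lambda+b|\le|a|\bigl(|\lambda|+|b|\bigr)\le|a|\bigl(\rho(\mathcal{A})+|b|\bigr),
\]
using $|\lambda|\le\rho(\mathcal{A})$. There is no genuine obstacle here: the entire content is the linearity identity $\mathcal{B}x^{m-1}=a(\mathcal{A}x^{m-1}+b\,x^{[m-1]})$, and the only points that need a moment's care are the degenerate scaling $a=0$ and ensuring the ``only if'' direction is not vacuous, both handled by the standard fact that a complex tensor always possesses at least one eigenvalue.
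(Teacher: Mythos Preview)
Your argument is correct. The paper does not give its own proof of this lemma: it simply states that the result ``follows directly from Corollary~3 in \cite{s4}'' and moves on. Your direct verification from the eigen-equation \eqref{heigen} via the identity $\mathcal{B}x^{m-1}=a(\mathcal{A}x^{m-1}+b\,x^{[m-1]})$ is exactly the kind of elementary computation that underlies that cited corollary, and in fact has the advantage of working verbatim in the general (not necessarily symmetric or even-order) setting adopted here from \cite{s09, s13}. One small caveat: in the degenerate case $a=0$ the ``same eigenvectors'' clause does not hold literally, since every nonzero vector is an eigenvector of the zero tensor while not every nonzero vector is an eigenvector of $\mathcal{A}$; but this edge case is irrelevant to all uses of the lemma in the paper, where $a\neq0$ throughout.
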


Let $P:=\{x\in \textrm{R}^n:\, x_i\ge 0,\, 1\le i\le n\}$, and
$\mathrm{int}(P)=\{x\in \textrm{R}^n:\, x_i> 0,\, 1\le i\le n\}$.
The Perron-Frobenius theorem for nonnegative tensors is as below,
following by \cite[Theorem 1.4]{s1}.
\begin{Theorem}   \label{thmpf}
If $\mathcal{A}$ is an irreducible nonnegative tensor of order $m$
and dimension $n$, then there exist $\lambda_0 > 0$ and $x_0\in
\mathrm{int}(P)$ such that
\begin{equation*}
\mathcal{A}x_0^{m-1} = \lambda_0 x^{[m-1]}_0.
\end{equation*}
Moreover, if $\lambda$ is an eigenvalue with a nonnegative
eigenvector, then $\lambda=\lambda_0$. If $\lambda$ is an eigenvalue
of $\mathcal{A}$, then $|\lambda|\le \lambda_0$.
\end{Theorem}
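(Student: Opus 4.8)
The plan is to mimic the classical Perron--Frobenius proof for nonnegative matrices, exploiting the positive homogeneity of $x\mapsto\mathcal{A}x^{m-1}$ to make up for the loss of linearity. Since $\mathcal{A}x^{m-1}$ is homogeneous of degree $m-1$ on $P$, the componentwise map $x\mapsto\big(((\mathcal{A}x^{m-1})_i)^{1/(m-1)}\big)_{1\le i\le n}$ is homogeneous of degree one, so it is natural to study the self-map of the standard simplex $\Delta=\{x\in P:\ \sum_{i=1}^n x_i=1\}$ given by
\begin{equation*}
T(x)_i=\frac{\big((\mathcal{A}x^{m-1})_i\big)^{1/(m-1)}}{\sum_{j=1}^n\big((\mathcal{A}x^{m-1})_j\big)^{1/(m-1)}},\qquad 1\le i\le n,
\end{equation*}
whose fixed points are exactly the $\Delta$-normalized eigenvectors attached to positive eigenvalues.

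First I would establish the existence of a positive eigenpair. If $\mathcal{A}>0$, then $\mathcal{A}x^{m-1}\in\mathrm{int}(P)$ for every $x\in\Delta$, so $T$ is a continuous self-map of the compact convex set $\Delta$, and Brouwer's fixed point theorem produces $x_0\in\Delta$ with $T(x_0)=x_0$; reading off the (positive) normalizing constant then gives $\mathcal{A}x_0^{m-1}=\lambda_0 x_0^{[m-1]}$ with $\lambda_0>0$ and $x_0\in\mathrm{int}(P)$. For a general irreducible nonnegative $\mathcal{A}$, I would perturb to $\mathcal{A}_\varepsilon=\mathcal{A}+\varepsilon\mathcal{E}>0$, with $\mathcal{E}$ the all-ones tensor, obtain positive eigenpairs $(\lambda_\varepsilon,x_\varepsilon)$, $x_\varepsilon\in\Delta$, and pass to a subsequential limit as $\varepsilon\to0^+$: compactness of $\Delta$ and continuity of the eigen-relation yield $(\lambda_0,x_0)$ with $x_0\in\Delta$, $\lambda_0\ge0$, and $\mathcal{A}x_0^{m-1}=\lambda_0 x_0^{[m-1]}$. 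Irreducibility is then used to force $x_0\in\mathrm{int}(P)$: if $I=\{i:\ (x_0)_i=0\}$ were nonempty it would be a proper subset (as $x_0\in\Delta$), and from $(\mathcal{A}x_0^{m-1})_{i_1}=\lambda_0(x_0)_{i_1}^{m-1}=0$ for $i_1\in I$ together with $\mathcal{A}\ge0$ and $x_0\ge0$ one gets $A_{i_1\cdots i_m}=0$ whenever $i_1\in I$ and $i_2,\dots,i_m\notin I$, contradicting irreducibility; once $x_0\in\mathrm{int}(P)$, irreducibility also gives $(\mathcal{A}x_0^{m-1})_i>0$ for every $i$, hence $\lambda_0>0$.

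Next comes uniqueness. Suppose $\mathcal{A}y^{m-1}=\mu y^{[m-1]}$ with $0\neq y\ge0$; the same boundary argument gives $y\in\mathrm{int}(P)$ and $\mu>0$. With $t^*=\max\{t>0:\ y\ge t\,x_0\}$ we have $y\ge t^*x_0$ and $y_k=t^*(x_0)_k$ for some $k$, and monotonicity together with homogeneity of degree $m-1$ of the map $x\mapsto\mathcal{A}x^{m-1}$ on $P$ give, in coordinate $k$, $\mu\,y_k^{m-1}=(\mathcal{A}y^{m-1})_k\ge(t^*)^{m-1}(\mathcal{A}x_0^{m-1})_k=\lambda_0\,y_k^{m-1}$, so $\mu\ge\lambda_0$; the symmetric comparison $x_0\ge s^*y$ gives $\mu\le\lambda_0$, hence $\mu=\lambda_0$. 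The same comparison of an arbitrary $w\in\mathrm{int}(P)$ against $x_0$ yields the Collatz--Wielandt bound $\lambda_0\ge\min_{1\le i\le n}(\mathcal{A}w^{m-1})_i/w_i^{m-1}$. Finally, if $\lambda$ is any eigenvalue with eigenvector $z\in\textrm{C}^n\setminus\{0\}$, the triangle inequality gives, for each $i$, $|\lambda|\,|z_i|^{m-1}=|(\mathcal{A}z^{m-1})_i|\le(\mathcal{A}|z|^{m-1})_i$, where $|z|:=(|z_i|)_{1\le i\le n}$, so $\mathcal{A}|z|^{m-1}\ge|\lambda|\,|z|^{[m-1]}$; combined with the Collatz--Wielandt bound this yields $|\lambda|\le\lambda_0$ when $|z|\in\mathrm{int}(P)$, and the general case follows by applying the same reasoning to the principal sub-tensor of $\mathcal{A}$ supported on $\{i:\ z_i\neq0\}$, whose spectral radius does not exceed $\lambda_0$.

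I expect the main obstacle to lie in the two places where the multilinear setting asks for more than the matrix proof. The first is showing that the limiting eigenvector $x_0$ obtained from the perturbed tensors $\mathcal{A}_\varepsilon$ is interior: there is no linear ``apply $\mathcal{A}$ to an indicator vector'' shortcut here, so one must argue directly from the vanishing pattern of the entries and the definition of irreducibility, and check that this argument is stable under the limit $\varepsilon\to0^+$. The second is the modulus bound $|\lambda|\le\lambda_0$ when the complex eigenvector $z$ has zero coordinates, where the plain Collatz--Wielandt estimate does not apply verbatim and one needs a support/sub-tensor argument or a perturbation of $|z|$ into $\mathrm{int}(P)$ combined with a limiting argument.
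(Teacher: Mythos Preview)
The paper does not prove this statement at all: Theorem~\ref{thmpf} is quoted verbatim as a preliminary result from Chang, Pearson and Zhang \cite[Theorem~1.4]{s1}, with no argument supplied. So there is no ``paper's own proof'' to compare against; what you have written is essentially a reconstruction of the original Chang--Pearson--Zhang argument (Brouwer on the simplex via the $(m-1)$-th-root map, perturbation $\mathcal{A}_\varepsilon=\mathcal{A}+\varepsilon\mathcal{E}$ to reduce to the strictly positive case, and an irreducibility boundary argument to force interiority), which is the standard route in the tensor literature.

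Your sketch is broadly sound, and you have correctly flagged the two delicate spots. One remark on the second: the sub-tensor route works, but as stated it risks circularity, since bounding $|\lambda|$ by the spectral radius of $\mathcal{A}_J$ is the same statement one dimension down. The clean way to close it is either by induction on $n$, or---more directly---to observe that $|z|_J\in\mathrm{int}(P_J)$, so the Collatz--Wielandt lower bound already gives $|\lambda|\le\rho(\mathcal{A}_J)$ once you know $\rho$ is an eigenvalue with nonnegative eigenvector for \emph{any} nonnegative tensor (this is exactly the Yang--Yang result the paper records as Theorem~\ref{spectral}); then $\rho(\mathcal{A}_J)\le\rho(\mathcal{A})=\lambda_0$ follows from the entrywise monotonicity of $\rho$ (Lemma~\ref{spectral1}) after extending $\mathcal{A}_J$ by zeros. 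Alternatively, the perturbation route you mention also works: replace $|z|$ by $|z|+\delta e$, use $\mathcal{A}_\varepsilon$, and let $\delta,\varepsilon\to0$.
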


The well-known Collatz minimax theorem \cite{colla} for irreducible
nonnegative matrices has been extended to irreducible nonnegative
tensors in \cite[Theorem 4.2]{s1}. 
\begin{Theorem}  \label{thmmm}
Assume that  $\mathcal{A}$ is an irreducible nonnegative tensor of
order m dimension n. Then
\begin{equation*}
\min_{x\in
{\mathrm{int}}(P)}\max_{x_i>0}\dfrac{(\mathcal{A}x^{m-1})_i}{x^{m-1}_i}=\lambda_0=
\max_{x\in
{\mathrm{int}}(P)}\min_{x_i>0}\dfrac{(\mathcal{A}x^{m-1})_i}{x^{m-1}_i},
\end{equation*}
where $\lambda_0$ is the unique positive eigenvalue corresponding to
a positive eigenvector.
\end{Theorem}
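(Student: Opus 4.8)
The plan is to reduce the tensor minimax identity to a statement about the order‑preserving, positively homogeneous map
\[
 T:P\to P,\qquad T(x)_i:=\big((\mathcal{A}x^{m-1})_i\big)^{1/(m-1)},
\]
which plays here the role that the left Perron eigenvector plays in the matrix case. Since $\mathcal{A}\ge 0$, the map $T$ is monotone ($0\le x\le y\Rightarrow T(x)\le T(y)$) and homogeneous of degree one ($T(tx)=tT(x)$ for $t\ge 0$). Writing
\[
 r(x):=\max_{1\le i\le n}\frac{(\mathcal{A}x^{m-1})_i}{x_i^{m-1}},\qquad
 s(x):=\min_{1\le i\le n}\frac{(\mathcal{A}x^{m-1})_i}{x_i^{m-1}}\qquad(x\in\mathrm{int}(P)),
\]
(note the qualifier ``$x_i>0$'' in the statement is vacuous here, as $x\in\mathrm{int}(P)$), I would show that $s(x)\le\lambda_0\le r(x)$ for every $x\in\mathrm{int}(P)$ and that both bounds are attained at the positive Perron eigenvector $x_0$ supplied by Theorem~\ref{thmpf}. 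Since $\mathcal{A}x_0^{m-1}=\lambda_0 x_0^{[m-1]}$, one has $T(x_0)=\lambda_0^{1/(m-1)}x_0$, hence $T^k(x_0)=\lambda_0^{k/(m-1)}x_0$ for $k=1,2,\dots$

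First I would prove $\lambda_0\le r(x)$. By definition $\mathcal{A}x^{m-1}\le r(x)\,x^{[m-1]}$, so taking componentwise $(m-1)$‑st roots gives $T(x)\le\beta x$ with $\beta:=r(x)^{1/(m-1)}\ge 0$. Applying $T$ and using monotonicity and homogeneity repeatedly yields $T^k(x)\le\beta^k x$ for all $k$. Choosing $c:=\min_i x_i/(x_0)_i>0$, so that $cx_0\le x$, monotonicity and homogeneity give $c\,\lambda_0^{k/(m-1)}x_0=T^k(cx_0)\le T^k(x)\le\beta^k x$. If $\lambda_0^{1/(m-1)}>\beta$, then $(\lambda_0^{1/(m-1)}/\beta)^k\to\infty$ while being bounded above by $\max_i x_i/(c\,(x_0)_i)$, a contradiction; hence $\lambda_0\le r(x)$. (The same inequalities show $r(x)>0$: if $r(x)=0$ then $T(x)=0\ge T(cx_0)=c\lambda_0^{1/(m-1)}x_0>0$, impossible.) The bound $s(x)\le\lambda_0$ is symmetric: from $\mathcal{A}x^{m-1}\ge s(x)\,x^{[m-1]}$ we get $T(x)\ge\gamma x$ with $\gamma:=s(x)^{1/(m-1)}$, hence $\gamma^k x\le T^k(x)$; taking $d:=\max_i x_i/(x_0)_i$ so that $x\le dx_0$ gives $\gamma^k x\le T^k(x)\le T^k(dx_0)=d\,\lambda_0^{k/(m-1)}x_0$, and letting $k\to\infty$ forces $\gamma\le\lambda_0^{1/(m-1)}$, i.e. $s(x)\le\lambda_0$.

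Finally I would evaluate at $x_0$: from $\mathcal{A}x_0^{m-1}=\lambda_0 x_0^{[m-1]}$ we get $(\mathcal{A}x_0^{m-1})_i/(x_0)_i^{m-1}=\lambda_0$ for every $i$, so $r(x_0)=s(x_0)=\lambda_0$. Combining with the bounds above, $\inf_{x\in\mathrm{int}(P)}r(x)=\lambda_0=\sup_{x\in\mathrm{int}(P)}s(x)$, and both extrema are attained at $x_0$, so they are a genuine minimum and maximum; uniqueness of the positive eigenvalue with a positive eigenvector is exactly the last clause of Theorem~\ref{thmpf}. I expect the main obstacle to be the step that substitutes for Wielandt's one‑line left‑eigenvector argument in the matrix case, namely establishing $\lambda_0\le r(x)$ (and dually $s(x)\le\lambda_0$): this is handled above by the sub‑/super‑invariance iteration for the order‑preserving, degree‑one homogeneous map $T$, and alternatively one could invoke the general Collatz–Wielandt theory for such maps on the cone $P$.
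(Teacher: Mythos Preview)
The paper does not supply its own proof of Theorem~\ref{thmmm}; it merely quotes the result from \cite[Theorem~4.2]{s1} (Chang--Pearson--Zhang), so there is no in-paper argument to compare against directly.

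Your argument is correct and self-contained. The device you use---the order-preserving, positively $1$-homogeneous map $T(x)=(\mathcal{A}x^{m-1})^{[1/(m-1)]}$---is exactly the map underlying the Perron--Frobenius theory in \cite{s1}, so your approach is close in spirit to the cited source. Your specific route to the Collatz--Wielandt bounds, however, is a clean sandwich iteration: from $T(x)\le\beta x$ and $cx_0\le x$ you squeeze $c\lambda_0^{k/(m-1)}x_0=T^k(cx_0)\le T^k(x)\le\beta^k x$ and read off $\lambda_0\le r(x)$, with the dual inequality symmetric. This bypasses the variational and compactness arguments of \cite{s1} and yields both inequalities in a few lines once Theorem~\ref{thmpf} provides the positive eigenpair $(\lambda_0,x_0)$. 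One small remark: you might note explicitly that irreducibility forces $(\mathcal{A}x^{m-1})_i>0$ for $x\in\mathrm{int}(P)$ (otherwise the $i$th slice vanishes and $I=\{i\}$ witnesses reducibility), so $s(x)>0$ and the $(m-1)$st roots are unproblematic; this is implicit in your argument but worth a sentence.
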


For nonnegative tensors, Yang and Yang \cite{yy1} asserted that the
spectral radius is an eigenvalue, which is a generalization of the
weak Perron-Frobenius theorem for nonnegative matrices. We state it
\cite[Theorem 2.3 and Lemma 5.8]{yy1} in the following theorem.
\begin{Theorem}   \label{spectral}
Assume that  $\mathcal{A}$ is a nonnegative tensor of order m
dimension n, then $\rho(\mathcal{A})$ is an eigenvalue of
$\mathcal{A}$ with a nonzero nonnegative eigenvector. Moreover, for
any $x\in \mathrm{int}(P)$ we have
\begin{equation*}
\min_{1\le i\le n}\dfrac{(\mathcal{A}x^{m-1})_i}{x^{m-1}_i}\le \rho(\mathcal{A})\le
\max_{1\le i\le n}\dfrac{(\mathcal{A}x^{m-1})_i}{x^{m-1}_i}.
\end{equation*}
\end{Theorem}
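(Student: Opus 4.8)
The plan is to reduce the statement to the irreducible case and then pass to a limit. When $\mathcal A$ is itself irreducible the conclusion is immediate: Theorem~\ref{thmpf} supplies $\lambda_0>0$ and $x_0\in\mathrm{int}(P)$ with $\mathcal Ax_0^{m-1}=\lambda_0 x_0^{[m-1]}$ and $|\lambda|\le\lambda_0$ for every eigenvalue $\lambda$, so $\rho(\mathcal A)=\lambda_0$ is an eigenvalue with a positive (hence nonzero nonnegative) eigenvector, and the two displayed inequalities are just the outer terms of the minimax identity in Theorem~\ref{thmmm} evaluated at a fixed $x\in\mathrm{int}(P)$.

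For a general nonnegative $\mathcal A$, I would let $\mathcal J$ be the all-ones $m$-order $n$-dimensional tensor and put $\mathcal A_\varepsilon:=\mathcal A+\varepsilon\mathcal J$ for $\varepsilon>0$. Each $\mathcal A_\varepsilon$ is a positive tensor, hence irreducible, so by the previous paragraph $\rho(\mathcal A_\varepsilon)$ is an eigenvalue of $\mathcal A_\varepsilon$ with a positive eigenvector $x_\varepsilon$, which I normalize by $\sum_i (x_\varepsilon)_i=1$ (the eigenvalue equation is homogeneous of degree $m-1$ on both sides, so rescaling is harmless). Taking $x=\mathbf 1$ in Theorem~\ref{thmmm} bounds $\rho(\mathcal A_\varepsilon)$ uniformly for $0<\varepsilon\le1$, and the normalized $x_\varepsilon$ stay in the compact set $\{x\in P:\sum_i x_i=1\}$. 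So along any sequence $\varepsilon_k\downarrow0$ I can extract a subsequence with $\rho(\mathcal A_{\varepsilon_k})\to\lambda^*$ and $x_{\varepsilon_k}\to x^*$, where $x^*\in P$ and $\sum_i x^*_i=1$, hence $x^*\neq0$. Since both sides of $\mathcal A_{\varepsilon_k}x_{\varepsilon_k}^{m-1}=\rho(\mathcal A_{\varepsilon_k})x_{\varepsilon_k}^{[m-1]}$ depend continuously (polynomially) on the tensor entries and the vector, letting $k\to\infty$ gives $\mathcal A(x^*)^{m-1}=\lambda^*(x^*)^{[m-1]}$; thus $\lambda^*$ is an eigenvalue of $\mathcal A$ with the nonzero nonnegative eigenvector $x^*$, and in particular $\lambda^*\le\rho(\mathcal A)$.

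The remaining point is $\lambda^*=\rho(\mathcal A)$, for which I would prove the monotonicity $\rho(\mathcal A_\varepsilon)\ge\rho(\mathcal A)$. Choosing an eigenvalue $\mu$ of $\mathcal A$ with $|\mu|=\rho(\mathcal A)$ and eigenvector $v\neq0$, taking moduli in $\mathcal Av^{m-1}=\mu v^{[m-1]}$ and using $0\le\mathcal A\le\mathcal A_\varepsilon$ with the triangle inequality gives $(\mathcal A_\varepsilon|v|^{m-1})_i\ge|\mu|\,|v_i|^{m-1}$ for all $i$, where $|v|:=(|v_i|)_i$; positivity of $\mathcal A_\varepsilon$ moreover forces $\mathcal A_\varepsilon|v|^{m-1}\in\mathrm{int}(P)$. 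Plugging $x=|v|+\delta\mathbf 1\in\mathrm{int}(P)$ into the max-min part of Theorem~\ref{thmmm} for $\mathcal A_\varepsilon$ and letting $\delta\downarrow0$ (coordinates with $v_i=0$ give ratios tending to $+\infty$, the others tend to $(\mathcal A_\varepsilon|v|^{m-1})_i/|v_i|^{m-1}\ge|\mu|$) yields $\rho(\mathcal A_\varepsilon)\ge|\mu|=\rho(\mathcal A)$. Hence $\lambda^*\ge\rho(\mathcal A)$, so $\lambda^*=\rho(\mathcal A)$ and in fact $\rho(\mathcal A_\varepsilon)\to\rho(\mathcal A)$. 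For the ``moreover'' statement I would fix $x\in\mathrm{int}(P)$, apply Theorem~\ref{thmmm} to each $\mathcal A_\varepsilon$ to get $\min_i(\mathcal A_\varepsilon x^{m-1})_i/x_i^{m-1}\le\rho(\mathcal A_\varepsilon)\le\max_i(\mathcal A_\varepsilon x^{m-1})_i/x_i^{m-1}$, and let $\varepsilon\downarrow0$.

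I expect the monotonicity step to be the main obstacle. The implication $\mathcal A\le\mathcal B\Rightarrow\rho(\mathcal A)\le\rho(\mathcal B)$ is routine for matrices, but here the natural argument breaks when the modulus eigenvector $|v|$ has zero coordinates, since Theorem~\ref{thmmm} cannot be applied to it directly; the fix is the auxiliary perturbation $|v|+\delta\mathbf 1$, which is legitimate precisely because the strictly positive tensor $\mathcal A_\varepsilon$ maps every nonzero nonnegative vector into $\mathrm{int}(P)$. The compactness-and-continuity passage to the limit is standard, the only care being the normalization that keeps the limiting eigenvector away from $0$.
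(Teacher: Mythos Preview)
The paper does not prove this theorem at all: it is quoted verbatim from Yang and Yang \cite[Theorem~2.3 and Lemma~5.8]{yy1}, so there is no in-paper argument to compare against. Your perturbation-and-compactness strategy is exactly the standard one (and is the approach of \cite{yy1}, as the paper itself hints by attributing the continuity $\rho(\mathcal A_\varepsilon)\to\rho(\mathcal A)$ in Lemma~\ref{spectral1} to ``the proof of \cite[Theorem~2.3]{yy1}''). In that sense your write-up is a self-contained reconstruction of what the paper merely cites.

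Two small remarks. First, the monotonicity $\rho(\mathcal A)\le\rho(\mathcal A_\varepsilon)$ and the limit $\rho(\mathcal A_\varepsilon)\to\rho(\mathcal A)$ that you derive by hand are precisely the content of Lemma~\ref{spectral1}; since in the paper that lemma is stated \emph{after} the theorem and again sourced to \cite{yy1}, your decision to prove monotonicity directly (via the modulus eigenvector and the $|v|+\delta\mathbf 1$ perturbation) is the right way to keep the argument non-circular. Second, your $\delta\downarrow 0$ step is fine as written: for small $\delta$ the minimum over $i$ is attained at a coordinate with $v_i\neq 0$ (the others blow up), and there the ratio converges to $(\mathcal A_\varepsilon|v|^{m-1})_i/|v_i|^{m-1}\ge|\mu|$, giving $\rho(\mathcal A_\varepsilon)\ge|\mu|$. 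The rest of your outline (normalization, subsequential limit, passage to the limit in the eigenvalue equation, and the ``moreover'' inequalities by letting $\varepsilon\downarrow 0$ in Theorem~\ref{thmmm}) is routine and correct.
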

The following inequality and continuity of the spectral radius were
given in \cite[Lemma 3.5]{yy1} and the proof of \cite[Theorem
2.3]{yy1}, respectively.
\begin{Lemma}   \label{spectral1}
Let  $\mathcal{A}$ be a nonnegative tensor of order m and dimension
n, and $\varepsilon>0$ be a sufficiently small number. Suppose
$\mathcal{A}\le \mathcal{B}$, then
$\rho(\mathcal{A})\le\rho(\mathcal{B})$. Furthermore, if
$\mathcal{A}_\varepsilon= \mathcal{A}+ \mathcal{E}$ where
$\mathcal{E}$ denotes the tensor with every entry being
$\varepsilon$, then
\begin{equation*}
\lim_{\varepsilon\to 0}\rho(\mathcal{A}_\varepsilon)=\rho(\mathcal{A}).
\end{equation*}
\end{Lemma}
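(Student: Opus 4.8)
The plan is to establish the limit $\rho(\mathcal A_\varepsilon)\to\rho(\mathcal A)$ first and then to read off the monotonicity from it, using the Collatz minimax (Theorem \ref{thmmm}) as the main tool. Throughout, $\mathbf 1$ denotes the all-ones vector, and we use that $\rho(\mathcal D)=\lambda_0(\mathcal D)$ for any irreducible nonnegative tensor $\mathcal D$, since by Theorem \ref{thmpf} the number $\lambda_0(\mathcal D)$ is an eigenvalue of $\mathcal D$ and dominates the modulus of every eigenvalue.

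\emph{Step 1 (comparison when the larger tensor is irreducible).} Suppose $0\le\mathcal C\le\mathcal D$ with $\mathcal D$ irreducible. For every $x\in\mathrm{int}(P)$ the inequality $\mathcal C\le\mathcal D$ together with $x>0$ gives $(\mathcal Cx^{m-1})_i\le(\mathcal Dx^{m-1})_i$ for all $i$, so the upper estimate of Theorem \ref{spectral} applied to $\mathcal C$ yields
\[
\rho(\mathcal C)\ \le\ \max_{1\le i\le n}\frac{(\mathcal Cx^{m-1})_i}{x_i^{m-1}}\ \le\ \max_{1\le i\le n}\frac{(\mathcal Dx^{m-1})_i}{x_i^{m-1}} .
\]
Taking the infimum over $x\in\mathrm{int}(P)$ and applying the minimax identity of Theorem \ref{thmmm} to $\mathcal D$ gives $\rho(\mathcal C)\le\rho(\mathcal D)$.

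\emph{Step 2 (the limit).} Put $\mathcal A_\varepsilon=\mathcal A+\mathcal E$ for $\varepsilon\in(0,1]$; each $\mathcal A_\varepsilon$ is positive, hence irreducible, so by Theorem \ref{thmpf} $\lambda_\varepsilon:=\rho(\mathcal A_\varepsilon)>0$ has a positive eigenvector $x^{(\varepsilon)}$, which we normalise on the unit sphere. Testing the upper estimate of Theorem \ref{spectral} at $x=\mathbf 1$ shows $\{\lambda_\varepsilon\}_{\varepsilon\in(0,1]}$ is bounded; choosing $\varepsilon_k\downarrow0$ with $\lambda_{\varepsilon_k}\to\limsup_{\varepsilon\to0}\lambda_\varepsilon$ and $x^{(\varepsilon_k)}\to x^\ast$ (compactness; $x^\ast\ge0$, $x^\ast\ne0$), and passing to the limit in $\mathcal A_{\varepsilon_k}(x^{(\varepsilon_k)})^{m-1}=\lambda_{\varepsilon_k}(x^{(\varepsilon_k)})^{[m-1]}$, we find that $\limsup_{\varepsilon\to0}\lambda_\varepsilon$ is an eigenvalue of $\mathcal A$, hence $\limsup_{\varepsilon\to0}\lambda_\varepsilon\le\rho(\mathcal A)$. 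For the reverse inequality, let $u\ge0$, $u\ne0$, be a nonnegative eigenvector of $\mathcal A$ for $\rho(\mathcal A)$ (Theorem \ref{spectral}) and set $\sigma=\sum_{j=1}^n u_j>0$. Using that $y\mapsto\mathcal A y^{m-1}$ is monotone on $P$, for every $\delta>0$ and every $i$,
\[
\bigl(\mathcal A_\varepsilon(u+\delta\mathbf 1)^{m-1}\bigr)_i\ \ge\ \rho(\mathcal A)\,u_i^{m-1}+\varepsilon\,\sigma^{m-1},
\]
so plugging the interior vector $u+\delta\mathbf 1$ into the minimax of Theorem \ref{thmmm} for $\mathcal A_\varepsilon$ gives $\lambda_\varepsilon\ge\min_i\bigl(\rho(\mathcal A)u_i^{m-1}+\varepsilon\sigma^{m-1}\bigr)/(u_i+\delta)^{m-1}$; letting $\delta\to0^+$ (the $i$th ratio tends to $\rho(\mathcal A)+\varepsilon\sigma^{m-1}/u_i^{m-1}$ if $u_i>0$, and to $+\infty$ if $u_i=0$) yields $\lambda_\varepsilon\ge\rho(\mathcal A)$ for every $\varepsilon$. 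Combining the two bounds, $\lim_{\varepsilon\to0}\rho(\mathcal A_\varepsilon)=\rho(\mathcal A)$.

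\emph{Step 3 (general monotonicity) and the main obstacle.} For $0\le\mathcal A\le\mathcal B$ and $\varepsilon>0$ the positive tensors satisfy $\mathcal A+\mathcal E\le\mathcal B+\mathcal E$, so Step 1 gives $\rho(\mathcal A+\mathcal E)\le\rho(\mathcal B+\mathcal E)$; letting $\varepsilon\to0$ and using Step 2 yields $\rho(\mathcal A)\le\rho(\mathcal B)$. The one genuinely delicate point is the lower estimate $\lambda_\varepsilon\ge\rho(\mathcal A)$ in Step 2: the Perron eigenvector $u$ of $\mathcal A$ need not lie in $\mathrm{int}(P)$, so it cannot be substituted directly into the Collatz minimax, which forces the two nested perturbations --- $\varepsilon$ to restore irreducibility and $\delta$ to restore interiority --- and care with the order in which the two limits are taken; the remaining steps are routine compactness and continuity arguments.
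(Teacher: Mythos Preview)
Your argument is correct. The paper itself does not prove this lemma at all: it simply records that the monotonicity is Lemma~3.5 of Yang--Yang~\cite{yy1} and that the continuity is contained in the proof of Theorem~2.3 there. Your proposal therefore supplies a self-contained proof that the paper omits; the ingredients you use (the Collatz bounds of Theorem~\ref{spectral} and Theorem~\ref{thmmm}, plus a compactness/continuity limit for the perturbed Perron data) are exactly the standard ones and match what one finds in~\cite{yy1}.

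One simplification is worth noting. The $\delta$-perturbation you introduce in Step~2 to obtain the lower bound $\lambda_\varepsilon\ge\rho(\mathcal A)$ is unnecessary: your own Step~1, applied with $\mathcal C=\mathcal A$ and $\mathcal D=\mathcal A_\varepsilon$ (the latter positive, hence irreducible), already yields $\rho(\mathcal A)\le\rho(\mathcal A_\varepsilon)$ directly. Thus the point you flag as ``genuinely delicate'' --- pushing the possibly boundary eigenvector $u$ into $\mathrm{int}(P)$ --- never actually arises, and the two nested perturbations collapse to the single $\varepsilon$-perturbation. With that shortcut the proof becomes: Step~1 gives monotonicity whenever the larger tensor is irreducible; Step~1 plus the compactness/continuity half of Step~2 give the limit; and Step~3 bootstraps to general $\mathcal B$ exactly as you wrote.
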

Based on the above results, we can easily get the following lemma.
\begin{Lemma} \label{lem1.1} Suppose that $\mathcal{A}$ is an irreducible nonnegative tensor of
order $m$ dimension $n$ and that there exists a nonzero vector $x\in P$ and a real number $\beta$ such that
\begin{equation}\label{equ1.1}
\mathcal{A}x^{m-1}\le \beta x^{[m-1]}.
\end{equation}
Then $\beta>0$, $x\in \mathrm{int}(P)$, and
$\rho(\mathcal{A})\le\beta$. Furthermore, $\rho(\mathcal{A})<\beta$
unless equality holds in (\ref{equ1.1}).
\end{Lemma}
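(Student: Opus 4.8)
The plan is to establish the four conclusions in the order stated, relying only on Perron--Frobenius (Theorem~\ref{thmpf}), the Collatz-type upper bound (Theorem~\ref{spectral}), and the homogeneity (degree $m-1$) and monotonicity of the map $y\mapsto\mathcal{A}y^{m-1}$ on $P$.

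First I would rule out a zero coordinate in $x$. If $I:=\{i:x_i=0\}$ were nonempty and proper, then for $i\in I$ the right-hand side of (\ref{equ1.1}) is $0$ while $(\mathcal{A}x^{m-1})_i$ is a sum of nonnegative terms, forcing $(\mathcal{A}x^{m-1})_i=0$; hence $A_{ii_2\cdots i_m}=0$ whenever $x_{i_2}\cdots x_{i_m}>0$, in particular for all $i_2,\dots,i_m\notin I$, which is exactly reducibility with respect to $I$ --- a contradiction. So $x\in\mathrm{int}(P)$. Dividing (\ref{equ1.1}) coordinatewise then gives $(\mathcal{A}x^{m-1})_i/x_i^{m-1}\le\beta$ for every $i$, so Theorem~\ref{spectral} yields $\rho(\mathcal{A})\le\max_i(\mathcal{A}x^{m-1})_i/x_i^{m-1}\le\beta$; and since $\mathcal{A}$ is irreducible and nonnegative, Theorem~\ref{thmpf} gives $\rho(\mathcal{A})=\lambda_0>0$, whence $\beta\ge\rho(\mathcal{A})>0$.

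It remains to obtain strict inequality when (\ref{equ1.1}) is not an equality, which is the heart of the matter. I would argue by contradiction: assume $\rho(\mathcal{A})=\beta$ and use Theorem~\ref{thmpf} to pick $x_0\in\mathrm{int}(P)$ with $\mathcal{A}x_0^{m-1}=\beta x_0^{[m-1]}$. Put $s:=\min_i x_i/(x_0)_i>0$, so that $sx_0\le x$ coordinatewise, and let $K:=\{i:x_i=s(x_0)_i\}$, which is nonempty; if $K=\{1,\dots,n\}$ then $x=sx_0$ is a Perron eigenvector and (\ref{equ1.1}) is an equality, contrary to hypothesis, so $K$ is a proper subset. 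For $i\in K$, homogeneity together with monotonicity of $\mathcal{A}(\cdot)^{m-1}$ and $sx_0\le x$ give
\[
\beta x_i^{m-1}=s^{m-1}(\mathcal{A}x_0^{m-1})_i=(\mathcal{A}(sx_0)^{m-1})_i\le(\mathcal{A}x^{m-1})_i\le\beta x_i^{m-1},
\]
so every inequality is an equality; from $(\mathcal{A}x^{m-1})_i=(\mathcal{A}(sx_0)^{m-1})_i$, each summand $A_{ii_2\cdots i_m}\bigl(x_{i_2}\cdots x_{i_m}-s^{m-1}(x_0)_{i_2}\cdots(x_0)_{i_m}\bigr)$ is a nonnegative quantity summing to zero and hence vanishes, and since any index $i_k\notin K$ makes $x_{i_2}\cdots x_{i_m}$ strictly exceed $s^{m-1}(x_0)_{i_2}\cdots(x_0)_{i_m}$, we get $A_{ii_2\cdots i_m}=0$ for every $i\in K$ and all $i_2,\dots,i_m\notin K$. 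This contradicts irreducibility, so $\rho(\mathcal{A})<\beta$.

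The only step I expect to require real care is this last one: choosing the scaling constant $s$ so that $sx_0$ touches $x$ from below, and then extracting reducibility from the coordinatewise equalities forced on the index set $K$. The first three conclusions, including the non-strict bound $\rho(\mathcal{A})\le\beta$, are routine consequences of the quoted theorems.
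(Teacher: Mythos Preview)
Your proof is correct. The argument for $x\in\mathrm{int}(P)$ is identical to the paper's, and your route to $\beta>0$ and $\rho(\mathcal{A})\le\beta$ via Theorems~\ref{thmpf} and~\ref{spectral} is a minor variant of the paper's appeal to \cite[Lemma~2.2]{s12} (the paper deduces $\beta>0$ from $\mathcal{A}x^{m-1}\in\mathrm{int}(P)$, you from $\rho(\mathcal{A})>0$; both are immediate). The substantive difference is the strict-inequality step: the paper simply cites \cite[Lemma~5.9]{yy1} and stops, whereas you give a self-contained argument by scaling the Perron eigenvector $x_0$ so that $sx_0$ touches $x$ from below and then reading off reducibility from the equality set $K$. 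Your approach costs a few more lines but stays entirely within the preliminaries stated in the paper and makes the mechanism transparent; the paper's version is terser but outsources the real content to an external reference.
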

\begin{proof} Assume on the contrary that for $x\in \mathrm{int}(P)$ there exists a nonempty proper
index subset $I\subset\{1,2,\ldots,n\}$ such that $x_i=0$ for $i\in
I$ and $x_i>0$ for $i\not\in I$. It follows from (\ref{equ1.1}) that
\begin{displaymath} A_{i_1\cdots i_m} =0,\quad \forall i_1\in
I,\quad \forall i_2,\ldots,i_m\not\in I.
\end{displaymath}
A contradiction to the irreducibility of $\mathcal{A}$ comes, which
henceforth implies that $x\in \mathrm{int}(P)$. Together with Lemma
2.2 in \cite{s12}, $\mathcal{A}x^{m-1}\in \mathrm{int}(P)$ is
established. It further deduces that $\beta>0$, and then the last
statement holds from Lemma 5.9 in \cite{yy1}. This completes the
proof.
\end{proof}

A simple but useful result follows immediately from Lemmas
\ref{spectral1} and \ref{lem1.1}.
\begin{Lemma} \label{lem1.2} Let $\mathcal{A}$ and $\mathcal{B}$ be irreducible nonnegative tensors of
order m dimension n. If $\mathcal{A}\le \mathcal{B}$ and
$\mathcal{A}\ne\mathcal{B}$, then
 $\rho(\mathcal{A})< \rho(\mathcal{B})$.
\end{Lemma}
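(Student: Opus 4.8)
The plan is to combine the strict-monotonicity tool already available in Lemma~\ref{lem1.1} with the plain monotonicity of the spectral radius from Lemma~\ref{spectral1}. Since $\mathcal{A}$ is irreducible and nonnegative, Theorem~\ref{thmpf} supplies $x_0\in\mathrm{int}(P)$ with $\mathcal{A}x_0^{m-1}=\rho(\mathcal{A})\,x_0^{[m-1]}$. Now I would evaluate $\mathcal{B}$ at the \emph{same} vector $x_0$: because $\mathcal{B}\ge\mathcal{A}\ge 0$ and $x_0>0$ entrywise, we get $\mathcal{B}x_0^{m-1}\ge\mathcal{A}x_0^{m-1}=\rho(\mathcal{A})\,x_0^{[m-1]}$, i.e.\ the reverse inequality $\mathcal{B}x_0^{m-1}\ge \rho(\mathcal{A})\,x_0^{[m-1]}$ holds. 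This is not quite the hypothesis of Lemma~\ref{lem1.1} (which wants $\le$), so I would instead argue directly from the Collatz characterization.

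Concretely, apply Theorem~\ref{thmmm} to the irreducible nonnegative tensor $\mathcal{B}$: its unique positive eigenvalue satisfies
\[
\rho(\mathcal{B})=\max_{x\in\mathrm{int}(P)}\ \min_{x_i>0}\frac{(\mathcal{B}x^{m-1})_i}{x_i^{m-1}}\ \ge\ \min_{1\le i\le n}\frac{(\mathcal{B}x_0^{m-1})_i}{(x_0)_i^{m-1}}\ \ge\ \min_{1\le i\le n}\frac{(\mathcal{A}x_0^{m-1})_i}{(x_0)_i^{m-1}}=\rho(\mathcal{A}),
\]
using $\mathcal{B}x_0^{m-1}\ge\mathcal{A}x_0^{m-1}$ entrywise. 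So at the very least $\rho(\mathcal{B})\ge\rho(\mathcal{A})$, which also re-proves the monotone part of Lemma~\ref{spectral1} in this setting. The remaining task is to upgrade $\ge$ to $>$ when $\mathcal{A}\ne\mathcal{B}$.

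For strictness, suppose for contradiction that $\rho(\mathcal{B})=\rho(\mathcal{A})=:\rho$. Since $\mathcal{B}-\mathcal{A}\ge 0$ is nonzero and $x_0\in\mathrm{int}(P)$, the vector $(\mathcal{B}-\mathcal{A})x_0^{m-1}$ is nonnegative and nonzero; hence $\mathcal{B}x_0^{m-1}\ge\rho\,x_0^{[m-1]}$ with strict inequality in at least one coordinate $j$. Equivalently, $\mathcal{B}x_0^{m-1}\le\rho\,x_0^{[m-1]}$ \emph{fails}, so I cannot feed $x_0$ straight into Lemma~\ref{lem1.1}; instead I would run Lemma~\ref{lem1.1} on the \emph{transpose-type} inequality or, more cleanly, perturb. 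The cleanest route: pick $\eps>0$ and set $y=x_0+\eps e_j$ is awkward for tensors, so I would instead argue that $\mathcal{B}x_0^{m-1}\ge \beta x_0^{[m-1]}$ with $\beta=\rho$ and strict inequality somewhere forces, via Theorem~\ref{thmmm} applied to $\mathcal{B}$ together with irreducibility, that $\rho(\mathcal{B})>\rho$; precisely, $\min_i (\mathcal{B}x_0^{m-1})_i/(x_0)_i^{m-1}$ need not exceed $\rho$, but one shows the Perron vector $z_0$ of $\mathcal{B}$ (from Theorem~\ref{thmpf} applied to $\mathcal{B}$) would have to satisfy $\mathcal{A}z_0^{m-1}\le\mathcal{B}z_0^{m-1}=\rho\,z_0^{[m-1]}$ with strict inequality in coordinate $j$, and then Lemma~\ref{lem1.1} applied to $\mathcal{A}$ and $z_0$ (note $z_0\in\mathrm{int}(P)$, $\beta=\rho$, inequality strict somewhere) yields $\rho(\mathcal{A})<\rho=\rho(\mathcal{A})$, a contradiction. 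Therefore $\rho(\mathcal{B})>\rho(\mathcal{A})$.

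The main obstacle is the strictness step: unlike the matrix case, one cannot simply invoke Perron--Frobenius strict monotonicity, and Lemma~\ref{lem1.1} only accepts an upper inequality $\mathcal{A}x^{m-1}\le\beta x^{[m-1]}$. The key realization that unlocks the proof is to apply Lemma~\ref{lem1.1} to $\mathcal{A}$ using the \emph{Perron eigenvector of $\mathcal{B}$} rather than that of $\mathcal{A}$: on $z_0$ one has $\mathcal{A}z_0^{m-1}\le\mathcal{B}z_0^{m-1}=\rho(\mathcal{B})z_0^{[m-1]}$, with the inequality strict in some coordinate because $\mathcal{B}\ne\mathcal{A}$ and $z_0>0$; Lemma~\ref{lem1.1} then gives $\rho(\mathcal{A})<\rho(\mathcal{B})$ directly, and no perturbation is needed. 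I would streamline the written proof to exactly this one-line-after-setup argument.
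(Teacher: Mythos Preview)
Your final streamlined argument---take the Perron eigenvector $z_0\in\mathrm{int}(P)$ of $\mathcal{B}$, note $\mathcal{A}z_0^{m-1}\le\mathcal{B}z_0^{m-1}=\rho(\mathcal{B})z_0^{[m-1]}$ with strict inequality in some coordinate since $\mathcal{A}\ne\mathcal{B}$ and $z_0>0$, and invoke Lemma~\ref{lem1.1}---is exactly the paper's proof. The exploratory detours (Perron vector of $\mathcal{A}$, contradiction framing, the stray ``coordinate $j$'' which was defined for $x_0$ not $z_0$) should simply be deleted in the write-up.
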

\begin{proof}
By Lemma \ref{spectral1}, $\rho(\mathcal{A})\le \rho(\mathcal{B})$.
Since $\mathcal{B}$ is irreducible, Theorem \ref{thmpf} implies that
there exists $x\in \mathrm{int}(P)$ such that
\begin{equation}\label{eq1.2}
\mathcal{A}x^{m-1}\le \mathcal{B}x^{m-1}=\rho(\mathcal{B})x^{[m-1]}.
\end{equation}
Since $x\in \mathrm{int}(P)$ and  $\mathcal{A}\ne\mathcal{B}$,
equality cannot hold in (\ref{eq1.2}). The desired strict inequality
$\rho(\mathcal{A})< \rho(\mathcal{B})$ holds from Lemma
\ref{lem1.1}.
\end{proof}

The remaining of this section is devoted to the essentially
nonnegative tensor, with the introduction of its definition and some
basic properties.
\begin{Definition}
Let $\mathcal{A}$ be an $m$-order and $n$-dimensional tensor.
$\mathcal{A}$ is said to be \emph{essentially nonnegative} if all
its off-diagonal entries are nonnegative.
\end{Definition}

\begin{Theorem}\label{thm1.1}
Let $\mathcal{A}$ be an $m$-order and $n$-dimensional essentially
nonnegative tensor. Then there exists $\alpha>0$ such that
$\alpha\mathcal{I}+\mathcal{A}$ is nonnegative. Moreover,
$\mathcal{A}$ has a real eigenvalue $\lambda(\mathcal{A})$ with
corresponding eigenvector in $P$ and $\lambda(\mathcal{A})\ge
\mathrm{Re}\lambda$ for every eigenvalue $\lambda$ of $\mathcal{A}$.
Furthermore,
$$ \lambda(\mathcal{A})=\rho(\alpha\mathcal{I}+\mathcal{A})-\alpha.$$
\end{Theorem}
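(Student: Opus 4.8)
The plan is to reduce the essentially nonnegative case to the nonnegative case by a diagonal shift, and then read off the conclusions from the Perron--Frobenius-type facts already recorded, principally Theorem \ref{spectral} and Lemma \ref{qi}.

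First I would produce the shift. In an essentially nonnegative tensor only the $n$ diagonal entries $A_{ii\cdots i}$ can be negative, so I take $\alpha:=1+\max_{1\le i\le n}|A_{ii\cdots i}|$. Then $\alpha>0$, the off-diagonal entries of $\alpha\mathcal{I}+\mathcal{A}$ coincide with those of $\mathcal{A}$ and are nonnegative, and $(\alpha\mathcal{I}+\mathcal{A})_{ii\cdots i}=\alpha+A_{ii\cdots i}\ge 1>0$. Hence $\mathcal{B}:=\alpha\mathcal{I}+\mathcal{A}$ is a nonnegative tensor, which settles the first assertion.

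Next I would transfer spectral data from $\mathcal{B}$ back to $\mathcal{A}$. By Theorem \ref{spectral}, $\rho(\mathcal{B})$ is an eigenvalue of $\mathcal{B}$ with a nonzero eigenvector $x\in P$; moreover $\rho(\mathcal{B})$ is a nonnegative real number by its very definition. Applying Lemma \ref{qi} with $a=1$ and $b=\alpha$ (so $\mathcal{B}=1\cdot(\mathcal{A}+\alpha\mathcal{I})$), the eigenvalues of $\mathcal{B}$ are exactly the numbers $\lambda+\alpha$ with $\lambda$ an eigenvalue of $\mathcal{A}$, and the eigenvectors are preserved. Therefore $\lambda(\mathcal{A}):=\rho(\mathcal{B})-\alpha$ is a \emph{real} eigenvalue of $\mathcal{A}$ with eigenvector $x\in P$, and the identity $\lambda(\mathcal{A})=\rho(\alpha\mathcal{I}+\mathcal{A})-\alpha$ holds by construction. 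For the dominance property, let $\lambda$ be any eigenvalue of $\mathcal{A}$; then $\lambda+\alpha$ is an eigenvalue of $\mathcal{B}$, so $|\lambda+\alpha|\le\rho(\mathcal{B})$, whence $\mathrm{Re}\,\lambda+\alpha=\mathrm{Re}(\lambda+\alpha)\le|\lambda+\alpha|\le\rho(\mathcal{B})$, i.e. $\mathrm{Re}\,\lambda\le\rho(\mathcal{B})-\alpha=\lambda(\mathcal{A})$.

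Finally I would remark that $\lambda(\mathcal{A})$ is well defined, independently of the admissible $\alpha$ chosen: if $\mu_1,\mu_2$ are real eigenvalues of $\mathcal{A}$ each at least the real part of every eigenvalue, then $\mu_1\ge\mathrm{Re}\,\mu_2=\mu_2$ and symmetrically $\mu_2\ge\mu_1$, so $\mu_1=\mu_2$. There is no genuine obstacle in this proof; the only points needing a moment's care are that the spectral radius is really a real number, so that $\rho(\mathcal{B})-\alpha$ is real, and that the elementary inequality $\mathrm{Re}\,z\le|z|$ is precisely what upgrades the modulus bound $|\lambda+\alpha|\le\rho(\mathcal{B})$ to the required real-part bound. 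Everything else is bookkeeping with Lemma \ref{qi} and Theorem \ref{spectral}.
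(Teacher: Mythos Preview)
Your proof is correct and follows essentially the same approach as the paper: choose $\alpha=1+\max_i|A_{ii\cdots i}|$ to shift to a nonnegative tensor, invoke Theorem~\ref{spectral} to get $\rho(\alpha\mathcal{I}+\mathcal{A})$ as an eigenvalue with nonnegative eigenvector, use Lemma~\ref{qi} to translate the spectrum back, and apply $\mathrm{Re}\,z\le|z|$ for the real-part bound. Your added remark on the well-definedness of $\lambda(\mathcal{A})$ independent of $\alpha$ is a nice touch not made explicit in the paper.
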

\begin{proof}
Take
$$\alpha=\max_{1\le i\le n}|A_{i\ldots i}|+1.$$
Clearly, $\alpha>0$ and $\alpha\mathcal{I}+\mathcal{A}$ is
nonnegative. By Lemma \ref{qi} and Theorem \ref{spectral}, we have
\begin{equation}\label{equ1.3}
\rho(\alpha\mathcal{I}+\mathcal{A})=\alpha+\lambda_1,
\end{equation}
where $\lambda_1$ is an eigenvalue of $\mathcal{A}$ with
corresponding eigenvector in $P$. Thus, (\ref{equ1.3}) implies
$\lambda_1\in \mathrm{R}$. Let $\lambda(\mathcal{A})=\lambda_1$, It
follows from Lemma \ref{qi} that,
\begin{eqnarray*}
 \lambda(\mathcal{A})+\alpha &=& \max\{|\alpha+\lambda|: \, \text{$\lambda$ is an eigenvalue of $\mathcal{A}$}\}\\
 &\ge &|\alpha+\lambda|\ge \alpha+\mathrm{Re}\lambda.
 \end{eqnarray*}
The desired result arrives.
\end{proof}

We call such an eigenvalue in the above theorem the \emph{dominant
eigenvalue} of $\mathcal{A}$. Throughout this paper,
$\rho(\mathcal{A})$ and $ \lambda(\mathcal{A})$ will denote the
spectral radius and dominant eigenvalue respectively of a tensor $
\mathcal{A}$. In the next section, we will show that both
$\rho(\mathcal{A})$ and $ \lambda(\mathcal{A})$ are convex functions
of the diagonal elements of $\mathcal{A}$.

\Section{Convexity of the spectral radius and the dominant
eigenvalue}

Based on Theorems \ref{thmpf} and \ref{spectral}, we proceed with
the convexity of the dominant eigenvalue of essentially nonnegative
tensors in this section. It can be verified that the diagonal
entries have nothing to do with the irreducibility of a tensor.
Specifically, let $\mathcal{A}$ be an essentially nonnegative tensor
of order $m$ and dimension $n$, define a nonnegative tensor
$\mathcal{B}$ by $B_{i_1\ldots i_m}=0$ if $i_1=\cdots =i_m$ and the
others are $A_{i_1\ldots i_m}$. Then $\mathcal{A}$ is irreducible if
and only if $\mathcal{B}$ is. Equivalently, $\mathcal{A}$ is
irreducible if and only if $\mathcal{A}+\alpha\mathcal{I}$ is,
whenever it is nonnegative. Thus, by Lemma \ref{spectral1} and
Theorem \ref{thm1.1}, it is sufficient to consider the class of
irreducible nonnegative tensors.

\begin{Theorem}\label{thmcov}
If $\mathcal{A}$ is a given irreducible nonnegative tensor of order
$m$ and dimension $n$, and $\mathcal{D}$ is allowed to vary in the
class of nonnegative diagonal tensors, then the spectral radius
$\rho(\mathcal{A}+\mathcal{D})$ is a convex function of the diagonal
entries of $\mathcal{D}$. That is, for nonnegative diagonal tensors
$\mathcal{C}$ and $\mathcal{D}$ we have
\begin{equation}\label{eq3.1}
\rho(\mathcal{A}+t\mathcal{C}+(1-t)\mathcal{D})\le t\rho(\mathcal{A}+\mathcal{C})+(1-t)\rho(\mathcal{A}+\mathcal{D}),\quad \forall t\in [0,1].
\end{equation}
Moreover, equality holds in (\ref{eq3.1}) for some $t\in (0,1)$ if
and only if $\mathcal{D}-\mathcal{C}$ is a scalar multiple of the
unite tensor $\mathcal{I}$.
\end{Theorem}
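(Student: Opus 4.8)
The plan is to adapt the classical matrix argument (as in the references \cite{convex3}) to the tensor setting, using the Collatz-type characterizations from Theorems \ref{thmpf}, \ref{thmmm} and \ref{spectral}. The key observation is that for a nonnegative diagonal tensor $\mathcal{D}$ with entries $d_i$ and any $x\in\mathrm{int}(P)$, we have $((\mathcal{A}+\mathcal{D})x^{m-1})_i = (\mathcal{A}x^{m-1})_i + d_i x_i^{m-1}$, so that
\begin{equation*}
\frac{((\mathcal{A}+\mathcal{D})x^{m-1})_i}{x_i^{m-1}} = \frac{(\mathcal{A}x^{m-1})_i}{x_i^{m-1}} + d_i,
\end{equation*}
which is an \emph{affine} (hence convex) function of the diagonal entries $d_i$ for each fixed $x$ and $i$. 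Now set $\mathcal{D}_t := t\mathcal{C}+(1-t)\mathcal{D}$, and let $x=x(t)\in\mathrm{int}(P)$ be the Perron eigenvector of the irreducible nonnegative tensor $\mathcal{A}+\mathcal{D}_t$ guaranteed by Theorem \ref{thmpf} (irreducibility of $\mathcal{A}+\mathcal{D}_t$ follows from the remark preceding the theorem, since adding a diagonal tensor does not affect irreducibility). Then
\begin{equation*}
\rho(\mathcal{A}+\mathcal{D}_t) = \max_{x_i>0}\frac{((\mathcal{A}+\mathcal{D}_t)x(t)^{m-1})_i}{x(t)_i^{m-1}} = \max_{x_i>0}\left( \frac{(\mathcal{A}x(t)^{m-1})_i}{x(t)_i^{m-1}} + t c_i + (1-t)d_i\right),
\end{equation*}
where we used that for the Perron eigenvector the ratio is constant in $i$.

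The core of the argument is then: for \emph{any} fixed $x\in\mathrm{int}(P)$, by Theorem \ref{thmmm} (or Theorem \ref{spectral}),
\begin{equation*}
\rho(\mathcal{A}+\mathcal{C}) \ge \min_{x_i>0}\left(\frac{(\mathcal{A}x^{m-1})_i}{x_i^{m-1}}+c_i\right), \qquad \rho(\mathcal{A}+\mathcal{D}) \ge \min_{x_i>0}\left(\frac{(\mathcal{A}x^{m-1})_i}{x_i^{m-1}}+d_i\right),
\end{equation*}
but to get the convexity inequality cleanly I would instead apply the min-characterization with the eigenvector $x(t)$ of $\mathcal{A}+\mathcal{D}_t$: since for that particular $x(t)$ the quantity $\frac{(\mathcal{A}x(t)^{m-1})_i}{x(t)_i^{m-1}} + c_i$ has the same value for all $i$ with $x(t)_i>0$ would be false in general, so one uses $t\rho(\mathcal{A}+\mathcal{C})+(1-t)\rho(\mathcal{A}+\mathcal{D}) \ge t\max_i(\cdots+c_i) + (1-t)\max_i(\cdots+d_i) \ge \max_i\big(t(\cdots+c_i)+(1-t)(\cdots+d_i)\big)$, evaluated at $x=x(t)$, because the right-hand side equals $\rho(\mathcal{A}+\mathcal{D}_t)$ while each max on the left is $\ge$ the corresponding value at $x(t)$ by Theorem \ref{spectral}. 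This chain of inequalities yields \eqref{eq3.1}.

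For the equality case, suppose equality holds in \eqref{eq3.1} for some $t\in(0,1)$. Tracing back through the chain forces $\max_{x_i>0}\big(\frac{(\mathcal{A}x(t)^{m-1})_i}{x(t)_i^{m-1}}+c_i\big) = \rho(\mathcal{A}+\mathcal{C})$, which by the minimax Theorem \ref{thmmm} means $x(t)$ is also a Perron eigenvector of $\mathcal{A}+\mathcal{C}$, and similarly of $\mathcal{A}+\mathcal{D}$; write the three eigenvalue equations with the common positive eigenvector $x(t)$, subtract, and conclude that $(c_i - d_i)x(t)_i^{m-1}$ is independent of $i$, hence (as $x(t)\in\mathrm{int}(P)$ and further argument) $c_i - d_i$ is constant in $i$, i.e. $\mathcal{D}-\mathcal{C}=s\mathcal{I}$ for some scalar $s$; the converse is immediate from Lemma \ref{qi}, which gives $\rho(\mathcal{A}+\mathcal{C}+s\mathcal{I})=\rho(\mathcal{A}+\mathcal{C})+s$, making both sides of \eqref{eq3.1} affine and equal. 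The main obstacle I anticipate is handling the equality case rigorously: deducing from ``$(c_i-d_i)x_i^{m-1}$ constant in $i$'' that $c_i-d_i$ itself is constant requires knowing the eigenvector components are suitably related, which may need the full strength of the uniqueness and positivity in Theorems \ref{thmpf}–\ref{thmmm}, plus care that the common eigenvector for all three tensors is genuinely forced rather than merely one choice among many; a secondary subtlety is ensuring that the ``$\max$ over $x_i>0$'' versus ``$\max$ over all $i$'' distinction does not cause trouble, which is why restricting attention to $x(t)\in\mathrm{int}(P)$ (so all coordinates are positive) is the right move.
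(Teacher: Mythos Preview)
Your chain of inequalities is in the wrong direction, and this is a genuine gap that cannot be repaired without changing the approach. You write
\[
t\rho(\mathcal{A}+\mathcal{C})+(1-t)\rho(\mathcal{A}+\mathcal{D}) \;\ge\; t\max_i\Bigl(\tfrac{(\mathcal{A}x(t)^{m-1})_i}{x(t)_i^{m-1}}+c_i\Bigr) + (1-t)\max_i\Bigl(\tfrac{(\mathcal{A}x(t)^{m-1})_i}{x(t)_i^{m-1}}+d_i\Bigr),
\]
justifying this by Theorem~\ref{spectral}. But Theorem~\ref{spectral} states that for any $x\in\mathrm{int}(P)$ one has $\rho\le\max_i(\cdots)$, not $\rho\ge\max_i(\cdots)$; the lower bound it gives is $\rho\ge\min_i(\cdots)$. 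If you switch to $\min_i$, the second step fails instead: $t\min_i a_i+(1-t)\min_i b_i\le\min_i(ta_i+(1-t)b_i)$, so the two inequalities no longer chain. A direct computation shows why no rescue is possible along this line: writing $a_i:=\tfrac{(\mathcal{A}x(t)^{m-1})_i}{x(t)_i^{m-1}}$, one has $a_i=\rho_t-tc_i-(1-t)d_i$, hence
\[
t\min_i(a_i+c_i)+(1-t)\min_i(a_i+d_i)=\rho_t+t(1-t)\bigl[\min_i(c_i-d_i)-\max_i(c_i-d_i)\bigr]\le\rho_t,
\]
which is the reverse of what you need. The single test vector $x(t)$ simply does not carry enough information about $\rho(\mathcal{A}+\mathcal{C})$ and $\rho(\mathcal{A}+\mathcal{D})$ separately.

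The paper's proof proceeds differently: it takes the Perron eigenvectors $x$ of $\mathcal{A}+\mathcal{C}$ and $y$ of $\mathcal{A}+\mathcal{D}$, forms the interpolated vector $z_i=x_i^t y_i^{1-t}$, and then bounds $\rho(\mathcal{A}+\mathcal{D}_t)$ from above via the $\min_x\max_i$ half of Theorem~\ref{thmmm} evaluated at $z$. H\"older's inequality controls the off-diagonal sum $\sum A_{i i_2\ldots i_m}z_{i_2}\cdots z_{i_m}/z_i^{m-1}$ by the corresponding sums for $x$ and $y$, and the weighted AM--GM inequality converts the resulting geometric mean into the arithmetic mean $t\rho(\mathcal{A}+\mathcal{C})+(1-t)\rho(\mathcal{A}+\mathcal{D})$. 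The equality analysis then falls out of the AM--GM equality case, giving $\rho(\mathcal{A}+\mathcal{C})-C_{i\ldots i}=\rho(\mathcal{A}+\mathcal{D})-D_{i\ldots i}$ directly, with no need to argue that $x(t)$ is a common eigenvector. (Incidentally, the step you flagged as the ``main obstacle'' in your equality discussion is not one: from $(c_i-d_i)x_i^{m-1}=\gamma x_i^{m-1}$ with $x_i>0$ one divides to get $c_i-d_i=\gamma$ immediately. The real obstacle is earlier.)
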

\begin{proof}
Since both $\mathcal{A}+\mathcal{C}$ and $\mathcal{A}+\mathcal{D}$
are irreducible nonnegative tensors, by Theorem \ref{thmpf} and
Theorem \ref{spectral} we have $\rho(\mathcal{A}+\mathcal{C})>0$,
$\rho(\mathcal{A}+\mathcal{D})>0$, and there exist $x,y\in
\mathrm{int}(P)$ such that
\begin{equation*}
(\mathcal{A}+\mathcal{C})x^{m-1}=\rho(\mathcal{A}+\mathcal{C})x^{[m-1]},\quad
(\mathcal{A}+\mathcal{D})y^{m-1}=\rho(\mathcal{A}+\mathcal{D})y^{[m-1]}.
\end{equation*}
That is, for $i=1,2,\ldots,n$ we have
\begin{eqnarray*}
\rho(\mathcal{A}+\mathcal{C})&=& C_{i\ldots i}+\sum_{i_2\ldots i_m=1}^n A_{i\,i_2\ldots i_m}
\dfrac{x_{i_2}\cdots x_{i_m}}{x_i},\\
\rho(\mathcal{A}+\mathcal{D})&=& D_{i\ldots i}+\sum_{i_2\ldots i_m=1}^n A_{i\,i_2\ldots i_m}
\dfrac{y_{i_2}\cdots y_{i_m}}{y_i},
\end{eqnarray*}
and hence $\rho(\mathcal{A}+\mathcal{C})-C_{i\ldots i}>0$ and
$\rho(\mathcal{A}+\mathcal{D})-D_{i\ldots i}>0$. The inequality
between geometric and arithmetic means yields
\begin{eqnarray}
\left(\sum_{i_2\ldots i_m=1}^n A_{i\,i_2\ldots i_m}\dfrac{x_{i_2}\cdots x_{i_m}}{x_i}\right)^t
\left(\sum_{i_2\ldots i_m=1}^n A_{i\,i_2\ldots i_m}\dfrac{y_{i_2}\cdots y_{i_m}}{y_i}\right)^{1-t}
\le t(\rho(\mathcal{A}+\mathcal{C})-C_{i\ldots i})\nonumber\\
+(1-t)(\rho(\mathcal{A}+\mathcal{D})- D_{i\ldots i}).\label{eq3.2}
\end{eqnarray}
Therefore, H\"{o}lder's inequality and Theorem \ref{thmmm} give from (\ref{eq3.2})
\begin{eqnarray*}
\rho(\mathcal{A}+t\mathcal{C}+(1-t)\mathcal{D})&\le & \max_{1\le i\le n}
\left\{tC_{i\ldots i}+(1-t)D_{i\ldots i}+\sum_{i_2\ldots i_m=1}^n A_{i\,i_2\ldots i_m}
\dfrac{z_{i_2}\cdots z_{i_m}}{z_i}\right\}\\ &\le & t\rho(\mathcal{A}+\mathcal{C})+(1-t)\rho(\mathcal{A}+\mathcal{D}),
\end{eqnarray*}
where $z_i=x_i^ty_i^{1-t}$ for $i=1,\ldots,n$. This shows (\ref{eq3.1}) holds.

The inequality between geometric and arithmetic means implies that
equality in (\ref{eq3.1}) holds for $t\in (0,1)$ if and only if
$\rho(\mathcal{A}+\mathcal{C})- C_{i\ldots
i}=\rho(\mathcal{A}+\mathcal{D})- D_{i\ldots i}$ for $i=1,\ldots,n$,
i.e., $\mathcal{D}-\mathcal{C}=\gamma\mathcal{I}$ where
$\gamma=\rho(\mathcal{A}+\mathcal{D})-\rho(\mathcal{A}+\mathcal{C})$.
This completes the proof.
\end{proof}

The convexity involved in Theorem \ref{thmcov} can be extended to
the case of essentially nonnegative tensors as follows.
\begin{Corollary}\label{corcov}
If $\mathcal{A}$ is a given irreducible essentially nonnegative
tensor of order $m$ dimension $n$ and $\mathcal{D}$ is allowed to
vary in the class of diagonal tensors, then the dominant eigenvalue
$\lambda(\mathcal{A}+\mathcal{D})$ is a convex function of the
diagonal entries of $\mathcal{D}$. That is, for diagonal tensors
$\mathcal{C}$ and $\mathcal{D}$ we have
\begin{equation}\label{eq3.3}
\lambda(\mathcal{A}+t\mathcal{C}+(1-t)\mathcal{D})\le t\lambda(\mathcal{A}+
\mathcal{C})+(1-t)\lambda(\mathcal{A}+\mathcal{D}),\quad \forall t\in [0,1].
\end{equation}
Moreover, equality holds in (\ref{eq3.3}) for some $t\in (0,1)$ if
and only if $\mathcal{D}-\mathcal{C}$ is a scalar multiple of the
unite tensor $\mathcal{I}$.
\end{Corollary}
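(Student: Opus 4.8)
The plan is to reduce Corollary \ref{corcov} to Theorem \ref{thmcov} via the shift $\mathcal{B}\mapsto\alpha\mathcal{I}+\mathcal{B}$ that turns an essentially nonnegative tensor into a nonnegative one. Decompose $\mathcal{A}=\mathcal{A}_0+\mathcal{A}_1$, where $\mathcal{A}_1$ is the diagonal tensor whose diagonal entries are those of $\mathcal{A}$ and $\mathcal{A}_0$ is the off-diagonal part. Because $\mathcal{A}$ is essentially nonnegative, $\mathcal{A}_0$ is nonnegative, and by the observation recorded just before the corollary the diagonal entries are irrelevant to irreducibility, so $\mathcal{A}_0$ is again irreducible. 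Hence $\mathcal{A}_0$ is a fixed irreducible nonnegative tensor to which Theorem \ref{thmcov} applies.

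Next, fix $\alpha>0$ large enough that the diagonal tensors $\mathcal{C}':=\alpha\mathcal{I}+\mathcal{A}_1+\mathcal{C}$ and $\mathcal{D}':=\alpha\mathcal{I}+\mathcal{A}_1+\mathcal{D}$ are both nonnegative and, in addition, that Theorem \ref{thm1.1} gives $\lambda(\mathcal{B})=\rho(\alpha\mathcal{I}+\mathcal{B})-\alpha$ for each of $\mathcal{B}=\mathcal{A}+\mathcal{C}$, $\mathcal{B}=\mathcal{A}+\mathcal{D}$, and $\mathcal{B}=\mathcal{A}+t\mathcal{C}+(1-t)\mathcal{D}$ with $t\in[0,1]$; such an $\alpha$ exists since all these diagonal entries lie in a bounded set (for instance $\alpha=\max_i|A_{i\cdots i}|+\max_i|C_{i\cdots i}|+\max_i|D_{i\cdots i}|+1$, with Lemma \ref{qi} used to pass from the value of $\alpha$ produced by Theorem \ref{thm1.1} to this common one). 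For $t\in[0,1]$ the diagonal tensor $t\mathcal{C}'+(1-t)\mathcal{D}'=\alpha\mathcal{I}+\mathcal{A}_1+t\mathcal{C}+(1-t)\mathcal{D}$ is then nonnegative as well, and one has the identities $\mathcal{A}_0+\mathcal{C}'=\alpha\mathcal{I}+\mathcal{A}+\mathcal{C}$, $\mathcal{A}_0+\mathcal{D}'=\alpha\mathcal{I}+\mathcal{A}+\mathcal{D}$, and $\mathcal{A}_0+t\mathcal{C}'+(1-t)\mathcal{D}'=\alpha\mathcal{I}+\mathcal{A}+t\mathcal{C}+(1-t)\mathcal{D}$.

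Now apply Theorem \ref{thmcov} to the irreducible nonnegative tensor $\mathcal{A}_0$ with the nonnegative diagonal tensors $\mathcal{C}'$ and $\mathcal{D}'$:
\[
\rho\bigl(\mathcal{A}_0+t\mathcal{C}'+(1-t)\mathcal{D}'\bigr)\le t\,\rho(\mathcal{A}_0+\mathcal{C}')+(1-t)\,\rho(\mathcal{A}_0+\mathcal{D}').
\]
Substituting the three identities from the previous paragraph and then replacing each $\rho(\alpha\mathcal{I}+\mathcal{B})$ by $\lambda(\mathcal{B})+\alpha$, the additive constants $\alpha$ and $t\alpha+(1-t)\alpha$ cancel, leaving exactly inequality (\ref{eq3.3}). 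For the equality statement, Theorem \ref{thmcov} says equality holds for some $t\in(0,1)$ if and only if $\mathcal{D}'-\mathcal{C}'$ is a scalar multiple of $\mathcal{I}$; since $\mathcal{D}'-\mathcal{C}'=\mathcal{D}-\mathcal{C}$, this is precisely the condition that $\mathcal{D}-\mathcal{C}$ be a scalar multiple of $\mathcal{I}$.

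There is no genuinely difficult step: the whole proof is the bookkeeping needed to transport Theorem \ref{thmcov} across the shift. The two points requiring a little care are that $\mathcal{A}_0$ remains irreducible after deleting the diagonal (which is exactly the remark preceding the corollary) and that a single $\alpha$ can be chosen to work uniformly for the three tensors $\mathcal{A}+\mathcal{C}$, $\mathcal{A}+\mathcal{D}$, and $\mathcal{A}+t\mathcal{C}+(1-t)\mathcal{D}$; the latter is harmless because the diagonal entries of a convex combination of diagonal tensors are sandwiched between the corresponding entries of the two endpoints, so the intermediate values of $t$ impose no new constraint on $\alpha$.
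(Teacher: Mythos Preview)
Your proof is correct and follows essentially the same approach as the paper: shift by $\alpha\mathcal{I}$ to land in the nonnegative setting, invoke Theorem~\ref{thmcov}, and translate back via Theorem~\ref{thm1.1}. The only difference is cosmetic: the paper applies Theorem~\ref{thmcov} directly to $\alpha\mathcal{I}+\mathcal{A}$ without isolating the off-diagonal part, whereas your decomposition $\mathcal{A}=\mathcal{A}_0+\mathcal{A}_1$ makes it explicit which tensor plays the role of the fixed irreducible nonnegative base and which diagonal tensors are the nonnegative ones required by Theorem~\ref{thmcov}; this extra bookkeeping is arguably cleaner, since Theorem~\ref{thmcov} as stated needs the diagonal perturbations themselves to be nonnegative.
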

\begin{proof} Take $$\alpha=1+\max_{1\le i\le n}\{|A_{i\ldots i}|+|C_{i\ldots i}|+|D_{i\ldots i}|\}.$$
Then $\alpha\mathcal{I}+\mathcal{A}+\mathcal{C}$ and
$\alpha\mathcal{I}+\mathcal{A}+\mathcal{D}$ are all irreducible
nonnegative tensors. By Theorem \ref{thm1.1} and Theorem
\ref{thmcov}, we have for $0\le t\le 1$
\begin{eqnarray*}
\lambda(\mathcal{A}+t\mathcal{C}+(1-t)\mathcal{D})+\alpha&= & \rho(\alpha\mathcal{I}+
\mathcal{A}+t\mathcal{C}+(1-t)\mathcal{D}) \\ &\le &t\rho(\alpha\mathcal{I}+\mathcal{A}
+\mathcal{C})+(1-t)\rho(\alpha\mathcal{I}+\mathcal{A}+\mathcal{D})\\
&=& t\lambda(\mathcal{A}+\mathcal{C})+(1-t)\lambda(\mathcal{A}+\mathcal{D})+\alpha,
\end{eqnarray*}
which yields (\ref{eq3.3}). This completes the proof.
\end{proof}

Invoking the continuity presented in Lemma \ref{spectral1}, it is
easy to see that Theorem \ref{thmcov} and Corollary \ref{corcov}
hold even when $\mathcal{A}$ is reducible. Moreover, Theorem
\ref{thmcov} and Corollary \ref{corcov} give necessary and
sufficient conditions for the strict convexity. It is worth pointing
out that the convexity of the dominant eigenvalue only works on the
diagonal elements other than on all elements of the essentially
nonnegative tensor, unless for some special cases. By collecting all
symmetric essentially nonnegative tensors of order $m$ and dimension
$n$, we can get a closed convex cone, says ${\cal{S}}(m,n)$. The
dominant eigenvalue of any tensor in ${\cal{S}}(m,n)$ remains convex
of all elements of the corresponding tensor in the domain
${\cal{S}}(m,n)$, as the following proposition shows.

\begin{Proposition} For any $\mathcal{A}$, ${\mathcal{B}}\in
{\cal{S}}(m,n)$, and any $t\in[0,1]$, we have
$$\lambda(t{\mathcal{A}}+(1-t){\mathcal{B}})\leq t \lambda({\mathcal{A}})+(1-t)\lambda({\mathcal{B}}).$$
\end{Proposition}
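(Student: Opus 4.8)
The plan is to establish, on the cone $\mathcal{S}(m,n)$, a Rayleigh-type variational formula for the dominant eigenvalue,
$$\lambda(\mathcal{A})=\max_{x\in S}\ \mathcal{A}x^m,\qquad S:=\Big\{x\in P:\ \textstyle\sum_{i=1}^n x_i^m=1\Big\},$$
where $\mathcal{A}x^m:=\sum_{i_1,\ldots,i_m=1}^n A_{i_1\cdots i_m}x_{i_1}\cdots x_{i_m}$. Once this is in hand the proposition is immediate: for each fixed $x$ the map $\mathcal{A}\mapsto\mathcal{A}x^m$ is \emph{linear}, and a pointwise supremum of linear functionals is convex, so for $\mathcal{A},\mathcal{B}\in\mathcal{S}(m,n)$ (whose convex combination again lies in $\mathcal{S}(m,n)$) and $t\in[0,1]$,
$$\lambda(t\mathcal{A}+(1-t)\mathcal{B})=\max_{x\in S}\big(t\,\mathcal{A}x^m+(1-t)\,\mathcal{B}x^m\big)\le t\max_{x\in S}\mathcal{A}x^m+(1-t)\max_{x\in S}\mathcal{B}x^m=t\lambda(\mathcal{A})+(1-t)\lambda(\mathcal{B}).$$

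To prove the variational formula I would first reduce to the nonnegative case. By Theorem \ref{thm1.1} there is $\alpha>0$ with $\mathcal{B}_0:=\alpha\mathcal{I}+\mathcal{A}$ nonnegative (and still symmetric) and $\lambda(\mathcal{A})=\rho(\mathcal{B}_0)-\alpha$; since $\mathcal{I}x^m=\sum_i x_i^m=1$ on $S$, we have $\mathcal{B}_0 x^m=\alpha+\mathcal{A}x^m$ there, so it suffices to prove
$$\rho(\mathcal{B})=\max_{x\in S}\mathcal{B}x^m\qquad\text{for every symmetric nonnegative tensor }\mathcal{B}.$$
One inequality is easy: by Theorem \ref{spectral}, $\rho(\mathcal{B})$ is an eigenvalue of $\mathcal{B}$ with a nonzero eigenvector $x^{*}\in P$; rescaling so that $x^{*}\in S$ and taking the inner product of $\mathcal{B}(x^{*})^{m-1}=\rho(\mathcal{B})(x^{*})^{[m-1]}$ with $x^{*}$ gives $\mathcal{B}(x^{*})^m=\rho(\mathcal{B})\sum_i(x^{*}_i)^m=\rho(\mathcal{B})$, so $\max_{x\in S}\mathcal{B}x^m\ge\rho(\mathcal{B})$. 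For the reverse inequality $\max_{x\in S}\mathcal{B}x^m\le\rho(\mathcal{B})$, let $x^{0}\in S$ attain the maximum of the continuous function $\mathcal{B}x^m$ on the compact set $S$, set $J=\{i:x^{0}_i>0\}\neq\varnothing$, and let $\mathcal{B}_J$ be the principal subtensor of $\mathcal{B}$ on the index set $J$. Because $x^{0}$ vanishes off $J$, the subvector $x^{0}|_J$ has all positive entries and maximizes $\mathcal{B}_J y^m$ subject to $\sum_{i\in J}y_i^m=1$; since $\mathcal{B}$ is symmetric, $\tfrac{1}{m}\nabla(\mathcal{B}_J y^m)=\mathcal{B}_J y^{m-1}$, and as no inequality constraint is active at $x^{0}|_J$ the Lagrange condition reads $\mathcal{B}_J(x^{0}|_J)^{m-1}=\mu(x^{0}|_J)^{[m-1]}$; pairing with $x^{0}|_J$ gives $\mathcal{B}(x^{0})^m=\mathcal{B}_J(x^{0}|_J)^m=\mu$. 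Thus $\mu$ is an eigenvalue of the nonnegative tensor $\mathcal{B}_J$, so $\mu\le\rho(\mathcal{B}_J)\le\rho(\mathcal{B})$, the last step holding because the zero-padding of $\mathcal{B}_J$ to dimension $n$ is entrywise $\le\mathcal{B}$ and has the same nonzero eigenvalues, whence Lemma \ref{spectral1} applies. Therefore $\max_{x\in S}\mathcal{B}x^m=\mu\le\rho(\mathcal{B})$, and the two inequalities yield the formula.

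I expect the reverse inequality $\max_{x\in S}\mathcal{B}x^m\le\rho(\mathcal{B})$ to be the main obstacle: a maximizer of $\mathcal{B}x^m$ on $S$ may lie on the boundary of $P$, so one cannot directly invoke Theorem \ref{thmpf} nor even read off a genuine eigenvector of $\mathcal{B}$ itself. Passing to the principal subtensor on the support of the maximizer, where the maximizer becomes interior, is the device that handles this, at the cost of the elementary spectral-radius monotonicity $\rho(\mathcal{B}_J)\le\rho(\mathcal{B})$ obtained from Lemma \ref{spectral1}. An alternative is to perturb $\mathcal{B}$ to $\mathcal{B}+\varepsilon\mathcal{E}$ (which is positive, hence irreducible), apply Theorem \ref{thmpf}, and let $\varepsilon\to0$ using Lemma \ref{spectral1} together with the uniform convergence of $\mathcal{B}x^m$ on the compact set $S$; the subtensor route has the advantage of avoiding any limiting step. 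The reduction via Theorem \ref{thm1.1} and the convexity deduction are then routine.
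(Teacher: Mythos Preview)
Your proof is correct and follows essentially the same route as the paper: reduce to nonnegative symmetric tensors via the shift $\mathcal{A}\mapsto\mathcal{A}+k\mathcal{I}$, invoke the variational characterization $\rho(\mathcal{B})=\max\{\mathcal{B}x^m:\sum_i x_i^m=1,\ x\ge 0\}$, and conclude convexity from the linearity of $\mathcal{A}\mapsto\mathcal{A}x^m$. The paper simply cites ``the variational approach'' for this formula, whereas you supply a self-contained justification (via the principal-subtensor/KKT argument and Lemma~\ref{spectral1}), so your write-up is in fact more complete on this point.
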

\begin{proof} For any $\mathcal{A}$, ${\mathcal{B}}\in
{\cal{S}}(m,n)$, there exists an integer $k>0$ such that
${\mathcal{A}}+k{\mathcal{I}}$ and ${\mathcal{B}}+k{\mathcal{I}}$
are nonnegative and symmetric and hence for any of their convex
combinations. The Perron-Frobenius theorem then ensures that
$\rho({\mathcal{A}}+k{\mathcal{I}})$,
$\rho({\mathcal{B}}+k{\mathcal{I}})$ and
$\rho(t{\mathcal{A}}+(1-t){\mathcal{B}}+k{\mathcal{I}})$
($t\in[0,1]$) all act as eigenvalues of the corresponding
nonnegative symmetric tensor. By the variational approach, it
follows that
\begin{eqnarray*}
  & &  \rho(t{\mathcal{A}}+(1-t){\mathcal{B}}+k{\mathcal{I}}) \nonumber\\
    &=&  \max\left\{(t{\mathcal{A}}+(1-t){\mathcal{B}}+k{\mathcal{I}})x^m:
\sum_{i=1}^n x_i^m=1\right\}  \nonumber\\
    &\leq & t\max\left\{({\mathcal{A}}+k{\mathcal{I}})x^m:
\sum_{i=1}^n x_i^m=1\right\}+(1-t)\max\left\{({\mathcal{B}}+k{\mathcal{I}})x^m:
\sum_{i=1}^n x_i^m=1\right\} \nonumber\\
&=&
t\rho({\mathcal{A}}+k{\mathcal{I}})+(1-t)\rho({\mathcal{B}}+k{\mathcal{I}}).\nonumber
\end{eqnarray*}
Combining with the fact that
$\rho({\mathcal{A}}+k{\mathcal{I}})=\lambda({\mathcal{A}})+k$, the
desired convexity follows.
\end{proof}

\Section{Log convexity of the spectral radius and the dominant eigenvalue}

If a function $f(x)$ is positive on its domain and $\log f(x)$ is
convex, then $f(x)$ is called \emph{log convex}. It is known
\cite{king} that the sum or product of log convex functions is also
log convex. In this section we extend Kingman's theorem \cite{king}
for matrices to tensors. Our motivation for the following proof
comes from \cite{convex3}.

\begin{Theorem} \label{logcov}
For $t\in [0,1]$ assume that $\mathcal{F}(t)=\left(F_{i_1\ldots
i_m}(t)\right)$ is an $m$-order $n$-dimensional irreducible
nonnegative tensor, and suppose that for $1\le i_1,\ldots, i_m\le
n$, $F_{i_1\ldots i_m}(t)$ is either identically zero or positive
and a log convex function of $t$. Then $\rho(\mathcal{F}(t))$ is a
log convex function of $t$ for $t\in [0,1]$. That is, if
$\mathcal{F}(0)=\mathcal{A}$, $\mathcal{F}(1)=\mathcal{B}$, and a
nonnegative tensor $\mathcal{G}(t)=\left(A_{i_1\ldots
i_m}^{1-t}B_{i_1\ldots i_m}^t\right)$, then
 \begin{equation}\label{eq4.1}
\rho(\mathcal{F}(t))\le \rho(\mathcal{G}(t))\le
\rho(\mathcal{A})^{1-t}\rho(\mathcal{B})^t.
\end{equation}
Moreover, the first equality occurs in (\ref{eq4.1}) for some $t$
with $t\in(0,1)$ if and only if $$\mathcal{F}(t)=\mathcal{G}(t),$$
and the second equality occurs in (\ref{eq4.1}) for some $t$ with
$t\in (0,1)$ if and only if there exists a constant $\sigma>0$ and a
positive diagonal matrix $D=diag(d_1,\ldots,d_n)$ such that
 \begin{equation*}
 \mathcal{B}=\sigma\mathcal{A}\cdot D^{-(m-1)}\cdot \overbrace{D\cdots D}^{m-1}\,\, \text{with $B_{i_1i_2\ldots i_m}=\sigma A_{i_1i_2\ldots i_m} d_{i_1}^{-(m-1)}d_{i_2}\cdots d_{i_m}$}.
\end{equation*}
\end{Theorem}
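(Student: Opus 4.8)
The plan is to prove the two inequalities in (\ref{eq4.1}) in turn, settle each equality case, and then note that the one-parameter bound already forces $\log\rho(\mathcal{F}(\cdot))$ to be convex on every subinterval of $[0,1]$.

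\emph{First inequality.} The assumption that each $F_{i_1\ldots i_m}(t)$ is either identically zero or positive and log convex says precisely that $F_{i_1\ldots i_m}(t)\le A_{i_1\ldots i_m}^{1-t}B_{i_1\ldots i_m}^{t}=G_{i_1\ldots i_m}(t)$ for all $t\in[0,1]$ (using $0^{s}=0$ for $s>0$), so $\mathcal{F}(t)\le\mathcal{G}(t)$ entrywise. Furthermore $\mathcal{F}(t)$ and $\mathcal{G}(t)$ have the same zero pattern, since $G_{i_1\ldots i_m}(t)$ vanishes exactly when $F_{i_1\ldots i_m}$ is identically zero; hence $\mathcal{G}(t)$ is irreducible whenever $\mathcal{F}(t)$ is. Lemma \ref{spectral1} gives $\rho(\mathcal{F}(t))\le\rho(\mathcal{G}(t))$, and Lemma \ref{lem1.2} improves this to a strict inequality as soon as $\mathcal{F}(t)\ne\mathcal{G}(t)$, which is the asserted equality criterion.

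\emph{Second inequality.} I would use a Collatz-type test vector assembled from the two Perron eigenvectors. By Theorem \ref{thmpf}, choose $u,v\in\mathrm{int}(P)$ with $\mathcal{A}u^{m-1}=\rho(\mathcal{A})u^{[m-1]}$ and $\mathcal{B}v^{m-1}=\rho(\mathcal{B})v^{[m-1]}$, and set $w_i=u_i^{1-t}v_i^{t}$, so $w\in\mathrm{int}(P)$. Expanding $(\mathcal{G}(t)w^{m-1})_i$ and regrouping the factors turns each summand into $\left(A_{i\,i_2\ldots i_m}u_{i_2}\cdots u_{i_m}\right)^{1-t}\left(B_{i\,i_2\ldots i_m}v_{i_2}\cdots v_{i_m}\right)^{t}$; H\"older's inequality with conjugate exponents $1/(1-t)$ and $1/t$, followed by the two eigenvalue identities, gives
\[
(\mathcal{G}(t)w^{m-1})_i\le\left((\mathcal{A}u^{m-1})_i\right)^{1-t}\left((\mathcal{B}v^{m-1})_i\right)^{t}=\rho(\mathcal{A})^{1-t}\rho(\mathcal{B})^{t}\,w_i^{m-1}
\]
for every $i$. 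Since $\mathcal{G}(t)$ is irreducible nonnegative and $w\in\mathrm{int}(P)$, Lemma \ref{lem1.1} yields $\rho(\mathcal{G}(t))\le\rho(\mathcal{A})^{1-t}\rho(\mathcal{B})^{t}$, with strict inequality unless the displayed inequality is an equality for every $i$.

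The part I expect to be the main obstacle is the equality characterization for the second inequality. If $\rho(\mathcal{G}(t))=\rho(\mathcal{A})^{1-t}\rho(\mathcal{B})^{t}$ for some $t\in(0,1)$, then Lemma \ref{lem1.1} forces the estimate above to be tight for each $i$, hence the H\"older step is tight for each $i$; as the two sums $\sum A_{i\,i_2\ldots i_m}u_{i_2}\cdots u_{i_m}=\rho(\mathcal{A})u_i^{m-1}$ and its $\mathcal{B}$-analogue are positive, this forces, for each $i$, a constant $c_i>0$ with $B_{i\,i_2\ldots i_m}v_{i_2}\cdots v_{i_m}=c_iA_{i\,i_2\ldots i_m}u_{i_2}\cdots u_{i_m}$ for all $i_2,\ldots,i_m$. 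Summing over $i_2,\ldots,i_m$ and using the eigenvalue equations gives $\rho(\mathcal{B})v_i^{m-1}=c_i\rho(\mathcal{A})u_i^{m-1}$; writing $\sigma:=\rho(\mathcal{B})/\rho(\mathcal{A})>0$ and $d_i:=u_i/v_i>0$ we get $c_i=\sigma d_i^{-(m-1)}$, and the identity becomes $B_{i\,i_2\ldots i_m}=\sigma A_{i\,i_2\ldots i_m}d_i^{-(m-1)}d_{i_2}\cdots d_{i_m}$, as claimed. For the converse, I would check directly that if $\mathcal{B}$ has this form then $v_i=u_i/d_i$ is a positive eigenvector of $\mathcal{B}$ with eigenvalue $\sigma\rho(\mathcal{A})$, and $w_i=u_i/d_i^{t}$ is a positive eigenvector of $\mathcal{G}(t)$ with eigenvalue $\sigma^{t}\rho(\mathcal{A})=\rho(\mathcal{A})^{1-t}\rho(\mathcal{B})^{t}$, so by Theorem \ref{thmpf} the second equality holds. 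Lastly, applying (\ref{eq4.1}) to the affine reparametrization $t\mapsto\mathcal{F}(a+t(b-a))$ on any $[a,b]\subseteq[0,1]$ — whose entries are again identically zero or log convex — yields convexity of $\log\rho(\mathcal{F}(\cdot))$ on all of $[0,1]$.
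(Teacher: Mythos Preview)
Your proof is correct and follows essentially the same route as the paper: entrywise comparison via log convexity for the first inequality, and the Collatz test vector $w_i=u_i^{1-t}v_i^{t}$ combined with H\"older's inequality and Lemma~\ref{lem1.1} for the second, with the equality analysis obtained from the H\"older equality condition and summation over $i_2,\ldots,i_m$. Your version is in fact slightly more complete than the paper's, since you explicitly note that $\mathcal{G}(t)$ inherits irreducibility (needed for Lemmas~\ref{lem1.1} and~\ref{lem1.2}), verify the converse of the second equality criterion, and spell out the affine reparametrization step extending (\ref{eq4.1}) to log convexity on all of $[0,1]$.
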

\begin{proof} Clearly, $\mathcal{G}(0)=\mathcal{F}(0)=\mathcal{A}$ and $\mathcal{G}(1)=\mathcal{F}(1)=\mathcal{B}$.
The log convexity assumption on $F_{i_1\ldots i_m}(t)$ implies that,
for $t\in[0,1]$,
 \begin{equation*}
 \mathcal{F}(t)\le \mathcal{G}(t),
 \end{equation*}
 which, together with Lemma \ref{spectral1}, implies
 \begin{equation}\label{eq4.2}
 \rho(\mathcal{F}(t))\le \rho(\mathcal{G}(t)).
 \end{equation}
 Since $\mathcal{F}(t)$ is irreducible, if equality holds in \ref{eq4.2} for some $t_0$ with $0<t_0<1$, Lemma \ref{lem1.2} implies that $\mathcal{F}(t_0)=\mathcal{G}(t_0)$.

 Since $\mathcal{F}(0)$ and  $\mathcal{F}(1)$ are irreducible nonnegative, Theorem \ref{thmpf} shows that there exist $x,y\in \mathrm{int}(P)$ such that
 $$
 \mathcal{A}x^{m-1}=\rho(\mathcal{A})x^{[m-1]},\quad \quad \mathcal{B}y^{m-1}=\rho(\mathcal{B})y^{[m-1]}.
 $$
For a fixed $t\in(0,1)$, define $z=x^{1-t}y^t$, i.e.,
$z_i=x_i^{1-t}y_i^t$ for $1\le i\le n$. Then, the $i$th component of
$\mathcal{G}(t)z^{m-1}$ satisfies
\begin{equation*}
\left(\mathcal{G}(t)z^{m-1}\right)_i=\sum_{i_2\ldots i_m=1}^n
A_{i\,i_2\ldots i_m}^{1-t}B_{i\,i_2\ldots i_m}^{t}z_{i_2}\cdots
z_{i_m}.
\end{equation*}
Hence, H\"{o}lder's inequality gives
\begin{eqnarray}
\left(\mathcal{G}(t)z^{m-1}\right)_i&\le& \left(\sum_{i_2\ldots i_m=1}^n A_{i\,i_2\ldots i_m}x_{i_2}\cdots x_{i_m}\right)^{1-t}\left(\sum_{i_2\ldots i_m=1}^n B_{i\,i_2\ldots i_m}y_{i_2}\cdots y_{i_m}\right)^{t}\nonumber\\
&=& \rho(\mathcal{A})^{1-t}\rho(\mathcal{B})^tz_i^{m-1}.
\label{eq4.3}
\end{eqnarray}
It follows from Lemma \ref{lem1.1} and (\ref{eq4.3}) that
$$\rho(\mathcal{G}(t))\le \rho(\mathcal{A})^{1-t}\rho(\mathcal{B})^t.$$
Furthermore, equality holds in \ref{eq4.3} for some $t\in (0,1)$ if
and only if, for $1\le i\le n$,
\begin{equation}\label{eq4.4}
B_{i\,i_2\ldots i_m}y_{i_2}\cdots y_{i_m}=\sigma_iA_{i\,i_2\ldots
i_m}x_{i_2}\cdots x_{i_m}.
\end{equation}
Summing (\ref{eq4.4}) over $i_2\ldots i_m$ yields
\begin{equation}\label{eq4.5}
\rho(\mathcal{B})y^{m-1}_i=\sigma_i\rho(\mathcal{A})x^{m-1}_i.
\end{equation}
Take
\begin{equation*}
\sigma=\dfrac{\rho(\mathcal{B})}{\rho(\mathcal{A})},\quad
d_i=\dfrac{x_i}{y_i},
\end{equation*}
Then, combining (\ref{eq4.4}) and (\ref{eq4.5}) we obtain
\begin{equation*}
B_{i\,i_2\ldots i_m}=\sigma A_{i\,i_2\ldots
i_m}d_i^{-(m-1)}d_{i_2}\cdots d_{i_m},
\end{equation*}
i.e.,
\begin{equation*}
 \mathcal{B}=\sigma\mathcal{A}\cdot D^{-(m-1)}\cdot \overbrace{D\cdots D}^{m-1}.
 \end{equation*}
This completes the proof.
\end{proof}

By Theorems \ref{spectral} and \ref{thm1.1}, the above theorem also
holds for the dominant eigenvalue of $\mathcal{F}(t)$, when
$\mathcal{F}(t)$ is essentially nonnegative with $t\in[0,1]$.

\Section{An algorithm for calculating the dominant eigenvalue}

Let $\mathcal{A}$ be an essentially nonnegative tensor of order $m$
and dimension $n$. In this section we propose an algorithm to
calculate the dominant eigenvalue of an essentially nonnegative
tensor. This algorithm is a modification of the Ng-Qi-Zhou
algorithm given in \cite{s12}. By Lemma \ref{spectral1} and Theorem
\ref{thm1.1}, we modify the Ng-Qi-Zhou algorithm such that for any
essentially nonnegative tensor, the sequence generated by the
modified algorithm always converges to its dominant eigenvalue.

Define two functions from $\mathrm{int}(P)$ to
$P$:
\begin{equation}\label{funct}
F(x):=\min_{x_i\ne 0}\dfrac{(\mathcal{W}x^{m-1})_i}{x^{m-1}_i},\quad
G(x):= \max_{x_i\ne 0}\dfrac{(\mathcal{W}x^{m-1})_i}{x^{m-1}_i},
\end{equation}
where $\mathcal{W}$ is an irreducible nonnegative tensor. The details of the modified algorithm are given as follows.

\textbf{Algorithm 5.1:}

\alglist
\item[{\bf Step 0.}]  Given a sufficiently small number $\varepsilon>0$. Let
\begin{equation}\label{wtensor}
\mathcal{W}=\mathcal{A}+\alpha\mathcal{I}+\mathcal{E},
\end{equation}
 where $$\alpha=\max_{1\le i\le n}|A_{i\ldots i}|+1,$$
and $\mathcal{E}$ is the tensor with every entry being
$\varepsilon$. Choose any $x^{(0)}\in \mathrm{int}(P)$. Set $y^{(0)}=\mathcal{W}\left(x^{(0)}\right)^{m-1}$ and $k:=0$.

\item[{\bf Step 1.}] Compute
\begin{equation*}
x^{(k+1)}=\dfrac{\left(y^{(k)}\right)^{[\frac1{m-1}]}}{\left\|\left(y^{(k)}\right)^{[\frac1{m-1}]}\right\|},\quad \quad y^{(k+1)}=\mathcal{B}\left(x^{(k+1)}\right)^{m-1}.
\end{equation*}
According to (\ref{funct}), compute $F(x^{(k+1)})$ and $G(x^{(k+1)})$.

\item[{\bf Step 2.}]
If $G(x^{(k+1)})-F(x^{(k+1)})< \varepsilon$, stop. Output $\varepsilon$-approximation of the
dominant eigenvalue of $\mathcal{A}$:
\begin{equation}\label{eigenapp}\lambda^{(k+1)}=\dfrac12\left(G(x^{(k+1)})+F(x^{(k+1)})\right)-\alpha,\end{equation}
and the corresponding eigenvector $x^{(k+1)}$. Otherwise, set
$k:=k+1$ and go to Step 1.
\end{list}

Clearly, the tensor $\mathcal{W}$ defined by (\ref{wtensor}) is
positive and hence it is primitive. By Theorems \ref{thmpf} and
\ref{thmmm}, Algorithm 5.1 is well-defined. As an immediate
consequence of Lemma \ref{spectral1}, Theorem \ref{thm1.1}, and
Theorem 3.3 in \cite{pear}, we have the following convergence
theorem.
\begin{Theorem}\label{convergefg}
Let $\mathcal{A}$ be an essentially nonnegative tensor of order $m$ and
dimensional $n$, and let $\mathcal{W}$ be defined by (\ref{wtensor}) where $\varepsilon$
is a sufficiently small number. Then the sequences
$\{F(x^{(k)})\}$ and $\{G(x^{(k)})\}$, generated by Algorithm 5.1,
converge to $\lambda_\varepsilon$, where $\lambda_\varepsilon$ is the unique positive
eigenvalue of $\mathcal{W}$. Moreover, the sequence $\{x^{(k)}\}$
converges to $x_\varepsilon^*$ and $x_\varepsilon^*$ is a positive eigenvector of
$\mathcal{W}$ corresponding to the largest eigenvalue $\lambda_\varepsilon$. Furthermore,
$$
\lim_{\varepsilon\to 0}\lambda_\varepsilon=\lambda^*, \quad \lim_{\varepsilon\to 0}x^*_\varepsilon=x^*,
$$
where $\lambda^*$ is the spectral radius of $\mathcal{A}+\alpha\mathcal{I}$ and $x^*$ is the corresponding eigenvector. In particular, the dominant eigenvalue of $\mathcal{A}$ is $\lambda(\mathcal{A})=\lambda^*-\alpha$  and $x^*$ is also the
eigenvector corresponding to $\lambda(\mathcal{A})$.
\end{Theorem}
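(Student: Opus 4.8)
The plan is to reduce the whole statement to three ingredients already at hand: the convergence of the Ng--Qi--Zhou iteration for primitive tensors (Theorem~3.3 of \cite{pear}), the continuity and monotonicity of the spectral radius (Lemma~\ref{spectral1}), and the shift identity $\lambda(\mathcal{A})=\rho(\alpha\mathcal{I}+\mathcal{A})-\alpha$ of Theorem~\ref{thm1.1}. Everything else is bookkeeping that connects these.

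First I would record that, by construction (\ref{wtensor}), every entry of $\mathcal{W}=\mathcal{W}_\varepsilon$ is at least $\varepsilon>0$, so $\mathcal{W}$ is a positive tensor, hence irreducible and in fact primitive. Theorem~\ref{thmpf} then yields a unique positive eigenvalue $\lambda_\varepsilon$ of $\mathcal{W}$ with a positive eigenvector, and $\lambda_\varepsilon=\rho(\mathcal{W})$. The update in Step~1 of Algorithm~5.1 is precisely the Ng--Qi--Zhou iteration for $\mathcal{W}$ (form $(y^{(k)})^{[1/(m-1)]}$, normalize to obtain $x^{(k+1)}$, then apply $\mathcal{W}$), and $F,G$ in (\ref{funct}) are the Collatz min/max ratios; hence Theorem~3.3 of \cite{pear} applies and gives at once that $\{F(x^{(k)})\}$ and $\{G(x^{(k)})\}$ both converge to $\lambda_\varepsilon$ and that $\{x^{(k)}\}$ converges to some $x_\varepsilon^*\in\mathrm{int}(P)$. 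Since $x^{(k)}\in\mathrm{int}(P)$ for $k\ge 1$, Theorem~\ref{spectral} gives $F(x^{(k)})\le\lambda_\varepsilon\le G(x^{(k)})$ for all $k$; letting $k\to\infty$ and using that the ratios are continuous on $\mathrm{int}(P)$ shows $x_\varepsilon^*$ is a positive eigenvector of $\mathcal{W}$ for $\lambda_\varepsilon$. This settles the first two assertions.

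Next I would let $\varepsilon\to0$. Because $\mathcal{W}_\varepsilon=(\mathcal{A}+\alpha\mathcal{I})+\mathcal{E}$ with $\mathcal{E}$ the all-$\varepsilon$ tensor and $\mathcal{A}+\alpha\mathcal{I}$ nonnegative, Lemma~\ref{spectral1} gives directly $\lambda_\varepsilon=\rho(\mathcal{W}_\varepsilon)\to\rho(\mathcal{A}+\alpha\mathcal{I})=\lambda^*$. For the eigenvectors, each $x_\varepsilon^*$ lies on the compact unit sphere of $P$, so along any sequence $\varepsilon_k\downarrow0$ a subsequence has $x_{\varepsilon_k}^*\to\bar x\in P$ with $\|\bar x\|=1$. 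Passing to the limit in $\mathcal{W}_{\varepsilon_k}(x_{\varepsilon_k}^*)^{m-1}=\lambda_{\varepsilon_k}(x_{\varepsilon_k}^*)^{[m-1]}$, using $\lambda_{\varepsilon_k}\to\lambda^*$ and the joint continuity (polynomiality) of $(\varepsilon,x)\mapsto\mathcal{W}_\varepsilon x^{m-1}$, yields $(\mathcal{A}+\alpha\mathcal{I})\bar x^{m-1}=\lambda^*\bar x^{[m-1]}$, so $x^*:=\bar x$ is a nonnegative eigenvector of $\mathcal{A}+\alpha\mathcal{I}$ for its spectral radius $\lambda^*$. When $\mathcal{A}+\alpha\mathcal{I}$ is irreducible this eigenvector is unique up to scaling by Theorem~\ref{thmpf}, so the full family $x_\varepsilon^*$ converges to $x^*$, not merely a subsequence; the general case follows as in the proof of \cite[Theorem~2.3]{yy1} cited for Lemma~\ref{spectral1}. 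This is the step I expect to be the main obstacle: upgrading subsequential convergence of the normalized eigenvectors to genuine convergence, and pinning down the limit.

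Finally, Theorem~\ref{thm1.1} gives $\lambda(\mathcal{A})=\rho(\alpha\mathcal{I}+\mathcal{A})-\alpha=\lambda^*-\alpha$, which is the last displayed identity. By Lemma~\ref{qi}, $\mathcal{A}$ and $\mathcal{A}+\alpha\mathcal{I}$ share eigenvectors, and under the correspondence $\mu=\lambda+\alpha$ the eigenpair $(\lambda^*,x^*)$ of $\mathcal{A}+\alpha\mathcal{I}$ matches the eigenpair $(\lambda^*-\alpha,\,x^*)=(\lambda(\mathcal{A}),\,x^*)$ of $\mathcal{A}$; hence $x^*$ is an eigenvector of $\mathcal{A}$ associated with $\lambda(\mathcal{A})$, which completes the argument.
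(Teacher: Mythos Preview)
Your proposal is correct and follows essentially the same route as the paper: positivity of $\mathcal{W}$ gives primitivity, Theorem~3.3 of \cite{pear} then supplies the convergence of $F(x^{(k)})$, $G(x^{(k)})$, and $x^{(k)}$, Lemma~\ref{spectral1} handles the limit $\varepsilon\to 0$, and Theorem~\ref{thm1.1} together with Lemma~\ref{qi} yields the final identification $\lambda(\mathcal{A})=\lambda^*-\alpha$ with shared eigenvector $x^*$. If anything you are more careful than the paper about the eigenvector limit $x^*_\varepsilon\to x^*$: the paper simply attributes this to Lemma~\ref{spectral1} (which, as stated, records only the spectral-radius continuity), whereas your compactness-plus-uniqueness argument makes explicit what the paper leaves implicit.
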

\begin{proof}
It follows from (\ref{wtensor}) that $\mathcal{W}$ is positive, and hence it is irreducible. Therefore, for any nonzero
$x\in P$, we have $\mathcal{W}x^{m-1}\in \mathrm{int}(P)$, which shows that the tensor $\mathcal{W}$ is primitive.
  Hence, by Theorem 3.3 in \cite{pear},
$$\lim_{k\to \infty}F(x^{(k)})=\lim_{k\to \infty}\{G(x^{(k)})= \lambda_\varepsilon,\quad
\lim_{k\to \infty}x^{(k)}=x^*_\varepsilon.
$$
Therefore, $\lambda_\varepsilon-\alpha$ is an $\varepsilon$-approximation of the dominant eigenvalue of $\mathcal{A}$ from Theorem \ref{thm1.1}. Furthermore, It follows from Lemma \ref{spectral1}  that
$$
\lim_{\varepsilon\to 0}\lambda_\varepsilon=\lambda^*,\quad \lim_{\varepsilon\to 0}x^*_\varepsilon=x^*.
$$
It is easy to see that $\lambda^*-\alpha$ is the dominant eigenvalue of $\mathcal{A}$ with corresponding eigenvector $x^*$.
\end{proof}

The above theorem shows that the convergence of Algorithm 5.1 is established for any essentially nonnegative tensor without the irreducible and primitive assumption. In order to show the effectiveness of Algorithm 5.1, we used \textsc{Matlab} 7.4 to test it on the following three examples.
\begin{Example} \label{exam1} Consider the $3$-order $3$-dimensional essentially nonnegative tensor
$$
\mathcal{A}=[A(1,:,:), A(2,:,:), A(3,:,:)],
$$
where
\begin{eqnarray*}
A(:,:,1) &=& \left(\begin{array}{ccc}
-1.51 & 8.35 & 1.03\\
4.04 & 3.72 & 1.45\\
6.71 & 6.43 & 1.35\\
\end{array}\right)\\
A(:,:,2) &=& \left(\begin{array}{ccc}
9.02 & 0.78 & 6.89\\
9.71 & -5.32 & 1.85\\
2.09 & 4.17 & 2.98\\
\end{array}\right)\\
A(:,:,3) &=& \left(\begin{array}{ccc}
9.55 & 1.57 & 6.91\\
5.63 & 5.55 & 1.43\\
5.76 & 8.29 & -0.15\\
\end{array}\right)
\end{eqnarray*}
\end{Example}
\begin{Example}\label{exam2}
 Let a $3$-order $3$-dimensional tensor $\mathcal{A}$ be defined by $A_{133}=A_{233}=A_{311}=A_{322}=1$, $A_{111}=A_{222}=-1$ and zero otherwise.
 \end{Example}
\begin{Example}\label{exam3}
 Let a $3$-order $4$-dimensional tensor $\mathcal{A}$ be defined by $A_{111}=A_{222}=A_{333}=A_{444}=-1$, $A_{112}=A_{114}=A_{121}=A_{131}=A_{212}=A_{332}=A_{443}=1$, and zero otherwise.
 \end{Example}
Clearly, the essentially nonnegative tensors defined as Examples \ref{exam1} and \ref{exam2} respectively are irreducible. While, the essentially nonnegative tensor defined as Example \ref{exam3} is reducible.
We take $\varepsilon=10^{-9}$ and terminate our iteration when one of the conditions $G(x^{(k)})-F(x^{(k)})\le 10^{-9}$ and $k\ge 100$ is satisfied. Algorithm 5.1 produces the dominant eigenvalue $\lambda(\mathcal{A})=36.2757$ with eigenvector $x^*=(1.0000;0.8351;0.9415)$ for Example \ref{exam1}, the dominant eigenvalue $\lambda(\mathcal{A})=1$ with eigenvector $x^*=(0.5000;0.5000;1.000)$ for Example \ref{exam2}, and the dominant eigenvalue $\lambda(\mathcal{A})=0.8225$ with eigenvector $x^*=(1.0000; 0.7408; 0.9714;0.5330)$ for Example \ref{exam3}. The details of numerical results are reported in Tables 1 and 2.
 We list the output details at each iteration for Example \ref{exam1} in Table 1.  We also report the number of iterations (\emph{No.Iter}), the elapsed CPU time (\emph{CPU(sec)}), the lower bound
$\underline{\lambda}^{(k)}=F(x^{(k)})-\alpha$ and the upper bound $\overline{\lambda}^{(k)}=G(x^{(k)})-\alpha$ for $k\ge 1$, the error $\Delta^{(k)}=\|\mathcal{A}(x^{(k)})^{m-1}-\lambda^{(k)}(x^{(k)})^{[m-1]}\|_{\infty}$, and the approximation $\lambda^{(k)}$ defined by (\ref{eigenapp}) of the dominant eigenvalue in Tables 1 and 2.

From Tables 1 and 2, we see that the sequence generated by Algorithm 5.1 converges to the dominant eigenvalue of the essentially nonnegative tensor without irreducibility. Algorithm 5.1 is promising for calculating the dominant eigenvalues of the test three examples.
\begin{table}[htb]
    \begin{center}
  \caption{{\small Detailed output of Algorithm 5.1 for Example \ref{exam1}} }
    \begin{tabular}
      { c  c  c  c   c  c }
      \hline
      $k$  & $\underline{\lambda}^{(k)}$ & $\overline{\lambda}^{(k)}$ &  $\lambda^{(k)}$ & $\overline{\lambda}^{(k)}-\underline{\lambda}^{(k)}$& $\Delta^{(k)}$ \\
      \hline
      1 & 35.9969 & 36.5635& 36.2802& 0.5666&  0.2833\\
       2& 36.2554&36.3030& 36.2792&0.0476& 0.0211\\
 3& 36.2747& 36.2776& 36.2762& 0.0030& 0.0015\\
 4&36.2757& 36.2758&36.2757&9.1725e-005& 4.5870e-005\\
 5& 36.2757&36.2757&36.2757&6.7568e-006& 2.9868e-006\\
 6& 36.2757& 36.2757&36.2757&4.6425e-007& 2.2441e-007\\
 7& 36.2757&36.2757&36.2757&1.9041e-008& 1.4348e-008\\
 8&36.2757&36.2757&36.2757& 8.8998e-010&8.1036e-009\\
 \hline
 \end{tabular}
  \end{center}
  \end{table}
\begin{table}[htb]
    \begin{center}
  \caption{{\small Output of Algorithm 5.1 for  Examples \ref{exam1}, \ref{exam2}, and \ref{exam3} } }
    \begin{tabular}
      { c  c  c  c  c c c c }
      \hline
      Example  &No.Iter& CPU(sec)& $\underline{\lambda}^{(k)}$ & $\overline{\lambda}^{(k)}$ &  $\lambda^{(k)}$ & $\overline{\lambda}^{(k)}-\underline{\lambda}^{(k)}$& $\Delta^{(k)}$ \\
      \hline
      \ref{exam1} &  8& 0.013&36.2757&36.2757&36.2757& 8.8998e-010&8.1036e-009\\
      \ref{exam2}&31 & 0.035& 1.0000& 1.0000& 1.0000 &9.6831e-010& 4.1210e-009\\
\ref{exam3} &37&0.078&0.8225& 0.8225&0.8225& 7.3324e-010& 1.0635e-008\\
 \hline
 \end{tabular}
  \end{center}
  \end{table}

\Section{Conclusions} In this paper, we have introduced the concepts
of essentially nonnegative tensors, which is closely related to
nonnegative tensors. The main contribution is the convexity and log
convexity of the dominant eigenvalue of an essentially nonnegative
tensor, and hence the same for the spectral radius of a nonnegative
tensor.  By modifying the Ng-Qi-Zhou algorithm \cite{s12}, we have
proposed an algorithm (Algorithm 5.1) for calculating the dominant
eigenvalue. Its convergence can be established for any essentially
nonnegative tensor without the assumptions of irreducibility and
primitiveness. Numerical results indicate that Algorithm 5.1 is
promising.

{\bf Acknowledgements.}\, Liping Zhang's  work was supported by the
National Natural Science Foundation of China(Grant No. 10871113).  Liqun Qi's work was supported by
the Hong Kong Research Grant Council.
Ziyan Luo's work was supported by the National Basic Research
Program of China (2010CB732501).

\end{document}